\newtheorem{thm}{Theorem}[section]
\newtheorem{lem}[thm]{Lemma}
\newtheorem{prop}[thm]{Proposition}
\newtheorem{cor}[thm]{Corollary}
\theoremstyle{definition}
\newtheorem{defn}[thm]{Definition} 
\newtheorem{exmp}{Example}[section]
\theoremstyle{remark}
\newtheorem{rem}{Remark}
\newcommand{\spt}{\textnormal{spt}}
\title[Ramsey theory for hypergroups]{Ramsey theory for hypergroups}
\author{Vishvesh Kumar}
\address{Vishvesh Kumar \endgraf Department of Mathematics \endgraf Indian Institute of Technology Delhi \endgraf New Delhi - 110 016, India.} 
\email{vishveshmishra@gmail.com}
\author{Kenneth A. Ross} 
\address{ Kenneth A. Ross, \endgraf Prof Emeritus  \endgraf Department of Mathematics \endgraf University of Oregon \endgraf  Eugene, OR 97403, USA.} 
\email{kenross.math@gmail.com}
\author{Ajit Iqbal Singh}
\address{Ajit Iqbal Singh \endgraf INSA Emeritus Scientist\endgraf The Indian National Science Academy \endgraf New Delhi - 110002, India.} 
\email{ajitis@gmail.com}
\begin{document}

\begin{abstract}  In this paper, Ramsey theory for discrete hypergroups is introduced with  emphasis on polynomial hypergroups, discrete orbit hypergroups  and  hypergroup deformations of semigroups. In this context, new notions of Ramsey principle for hypergroups and $\alpha$-Ramsey hypergroup, $0 \leq \alpha<1,$  are defined and studied. 

\end{abstract}
\keywords{ Ramsey theory for semigroups, Discrete hypergroups, Discrete semiconvos, Ramsey semiconvos and hypergroups, Almost-Ramsey semiconvos and hypergroups, Almost-strong Ramsey semiconvos and hypergroups $\alpha$-Ramsey semiconvos and hypergroups }
\subjclass[2010]{Primary  43A62, 05D10; Secondary, 20M14}
\maketitle

\section{Introduction} Ramsey theory \cite{Ramsey1}, now a well-developed branch of combinatorics, has a long history dating back to 1892 starting with David Hilbert \cite{Hilbert}. For a discrete semigroup $(S, \cdot),$ the algebra structure of Stone-$\check{\mbox{C}}$ech compactification $\beta S$ of $S$ has been utilized with a great advantage to study the Ramsey theory;  a good account of all this can be found in Hindman and Strauss \cite{Dona}. 

Dunkl \cite{Dunkl}, Jewett \cite{Jewett} and Spector \cite{Spector} independently created locally compact hypergroups  under different names with the purpose of doing standard harmonic analysis. Hypergroups, same as convolution spaces, in short, convos, or, semi convolution spaces, in short, semiconvos \cite{Jewett} are probabilistic generalization of groups or semigroups respectively. In \cite{KKA}, we presented a necessary and sufficient condition for a discrete semigroup to become a semiconvo or hypergroup after deforming the product at idempotents. This together with polynomial hypergroups served as the initial motivation for this work. Another motivation for this came from the development of configurations and configuration equations in the hypergroup setting \cite{Willson}.
A paper of Golan and Tsaban \cite{Gili} gave a few more analogues or variants of Ramsey principle on groups and semigroups.

In this paper, we define the concepts of Ramsey hypergroups, almost-Ramsey hypergroups and almost-strong Ramsey hypergroups and prove that the double coset  hypergroup  of a commutative discrete hypergroup $K$ by a finite subgroup of the centre $Z(K)$ of $K,$ defined and studied by Ross \cite{Ken}, is a Ramsey hypergroup or almost-Ramsey hypergroup if $K$ is so. We  also prove that no polynomial hypergroup is an almost-strong Ramsey hypergroup. Further, we show that Chebyshev polynomial hypergroup of second kind is not an almost-Ramsey hypergroup.  Next, we introduce variants like $\alpha$-Ramsey semiconvos and hypergroups. We prove that every polynomial hypergroup is a $0$-Ramsey hypergroup. We show that the Chebyshev polynomial hypergroup of second kind can never be an $\alpha$-Ramsey hypergroup for $0<\alpha<1.$ We also prove that orbit space hypergroups arising from  group actions on discrete groups are $0$-Ramsey semiconvos or hypergroups. Finally, we generalize this to the semigroup setting for the action of a finite group of automorphisms of a semigroup.

For the sake of convenience we give relevant basics of hypergroups and Ramsey theory for semigroups in the next section. Section 3 contains our main results on hypergroup versions of the Ramsey principles and includes motivation with examples. The final section 4 is devoted to the variants like $\alpha$-Ramsey hypergroup, $0 \leq \alpha<1$.  

Let $\mathbb{Z}_+= \mathbb{N} \cup \{0\}$ and, $ \times$ and $+$ be the usual multiplication and addition respectively. For a set $A,$ the power set of $A$ will be denoted by $\mathcal{P}(A).$ For notational convenience, we take empty sums to be zero and empty product to be one.
\section{Relevant Basics of  Ramsey theory and hypergroups}
We give some basics of Ramsey theory and hypergroups which are relevant to our study.
\subsection{Basics of Ramsey theory for semigroups} \label{rs} 

To begin with we confine our attention to the following aspects of Ramsey theory, viz., Theorem \ref{fsum}, Theorem \ref{finitepro} below which served as a motivation for this work. We refer to \cite{Hindman} and \cite{Dona} or any other suitable sources like \cite{Gili} for more details.

Given a sequence $\langle x_n\rangle_{n=1}^\infty$ in $\mathbb{N},$\, $$\text{FS}(\langle x_n\rangle_{n=1}^\infty)= \{ \sum_{n \in F}x_n: F\, \text{is a  non-empty finite subset of}\,\, \mathbb{N} \}.$$   
\begin{thm}  {\bf $\text{Hindman}$} \cite[ Theorem 3.1 (Finite Sums Theorem)]{Hindman} \label{fsum} Let $r \in \mathbb{N}$ and let $\mathbb{N} = \bigcup_{i=1}^r A_i$ be a partition of $\mathbb{N}.$ There exist $i \in \{1,2,...,r\}$ and a sequence $\langle x_n\rangle_{n=1}^\infty$ in $\mathbb{N}$ such that FS$(\langle x_n\rangle_{n=1}^\infty) \subset A_i.$ 
\end{thm}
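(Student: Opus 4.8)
The plan is to prove this through the algebraic structure of the Stone--\v{C}ech compactification $\beta\mathbb{N}$, the approach alluded to in the introduction and developed in \cite{Dona}. First I would equip $\beta\mathbb{N}$, the space of ultrafilters on $\mathbb{N}$, with the operation $+$ extending addition on $\mathbb{N}$, defined for ultrafilters $p,q$ by declaring $A \in p+q$ exactly when $\{n \in \mathbb{N} : -n + A \in q\} \in p$, where $-n + A = \{m \in \mathbb{N} : n+m \in A\}$. A routine verification then shows that $(\beta\mathbb{N}, +)$ is a compact right-topological semigroup, in the sense that for each fixed $p$ the right translation $q \mapsto q + p$ is continuous.

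Next I would invoke the Ellis--Namakura lemma: every compact right-topological semigroup contains an idempotent. This yields an ultrafilter $p$ with $p + p = p$. Since $\mathbb{N} = \bigcup_{i=1}^r A_i$ and $p$ is an ultrafilter, exactly one block lies in $p$; fix $i$ with $A := A_i \in p$. The heart of the argument is the self-replicating property of members of an idempotent ultrafilter: if $A \in p = p+p$, then the set $A^\star := \{n \in A : -n + A \in p\}$ again belongs to $p$, and moreover for every $n \in A^\star$ one has $-n + A^\star \in p$. This is where idempotency is used twice, first to place $A^\star$ in $p$ and then to control its translates.

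With this lemma in hand the sequence $\langle x_n \rangle$ is built by recursion so as to keep all finite sums inside $A^\star \subseteq A$. Choose $x_1 \in A^\star$. Having chosen $x_1, \ldots, x_k$ with every sum $\sum_{n \in F} x_n$ (over nonempty $F \subseteq \{1,\ldots,k\}$) lying in $A^\star$, observe that the finite intersection $A^\star \cap \bigcap_F \bigl(-\sum_{n \in F} x_n + A^\star\bigr)$ belongs to $p$ and is therefore nonempty; pick $x_{k+1}$ in it. By construction every new finite sum either equals $x_{k+1}$ or has the form $\sum_{n \in F} x_n + x_{k+1}$, and each lies in $A^\star$. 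Hence $\text{FS}(\langle x_n\rangle_{n=1}^\infty) \subseteq A^\star \subseteq A = A_i$.

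The main obstacle is entirely contained in the two structural facts feeding the construction: the existence of an idempotent ultrafilter (Ellis--Namakura, resting on Zorn's lemma applied to a minimal closed subsemigroup) and the combinatorial lemma that $A^\star \in p$ with $-n + A^\star \in p$ for $n \in A^\star$. Once these are established the recursion is routine bookkeeping. I note that a purely elementary, ultrafilter-free proof exists as well, but the $\beta\mathbb{N}$ argument is both shorter and the one consonant with the compactification viewpoint of this paper.
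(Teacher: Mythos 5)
Your argument is correct as written: the operation you define on $\beta\mathbb{N}$ does make $q \mapsto q+p$ continuous for each fixed $p$; Ellis--Namakura then supplies an idempotent $p = p+p$; the star-set lemma (that $A^\star \in p$ and $-n+A^\star \in p$ for every $n \in A^\star$) is the standard key lemma for idempotent ultrafilters; and your recursion correctly keeps every new finite sum inside $A^\star$. It is, however, not the proof the paper points to: the paper states the theorem purely as a citation to Hindman's original 1974 article \cite{Hindman}, whose proof is a long, intricate, entirely elementary combinatorial construction making no use of ultrafilters or of $\beta\mathbb{N}$. Your route is the Galvin--Glazer argument, the one carried out in the paper's other reference \cite{Dona}, which is exactly what the authors invoke for the semigroup generalization (Theorem \ref{finitepro}); the chief advantage of your approach over the cited elementary proof is that it transfers verbatim from $(\mathbb{N},+)$ to arbitrary semigroups, which is the form the paper actually needs, and it is far shorter. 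What the elementary proof buys in exchange is independence from the machinery of compact right-topological semigroups (though not from the axiom of choice, which both arguments require). One small addition worth making: the paper remarks that Hindman's proof implicitly yields a sequence with \emph{distinct} terms, and your construction gives this too with one extra sentence --- an idempotent ultrafilter is nonprincipal, so each set from which you select $x_{k+1}$ is infinite, and $x_{k+1}$ may be chosen different from (indeed larger than) all previously chosen terms.
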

Any partition of a non-empty set $X = \cup_{i=1}^r X_i$ is usually called a {\it finite colouring} (or, in short, a {\it  colouring}, at times) of $X;$ and $X$ is said to be a coloured set.  A subset $A$ of a coloured set $X$ is  {\it monochromatic} if all members of $A$ have the same colour. $A$ is {\it almost-monochromatic} if all but finitely many members of $A$ have the same colour.

  The Finite Sums Theorem is often also stated as: for any colouring of  $\mathbb{N}$ there exists a sequence $\langle x_n\rangle_{n=1}^\infty$ in $\mathbb{N}$ such that all finite sums FS$(\langle x_n\rangle_{n=1}^\infty)$ are monochromatic.\\

Let $(S, \cdot)$ be a semigroup and $\langle x_n\rangle_{n=1}^\infty$ a sequence in $S.$ If $S$ is commutative, we set \, FP$(\langle x_n\rangle_{n=1}^\infty)= \{ \prod_{n \in F}x_n: F\, \text{is non-empty finite subset of}\,\, \mathbb{N} \}.$ If $S$ is non-commutative then FP$(\langle x_n\rangle_{n=1}^\infty)= \{ \prod_{j=1}^k x_{n_j}: k \in \mathbb{N},\, 1 \leq n_1< n_2< \cdots < n_k\}.$

Now, we replace $(\mathbb{N},+)$ by $(S, \cdot)$ and finite sums (FS) by finite products (FP) in Theorem \ref{fsum} above. Of course, if $(S, \cdot)$ has an idempotent $s$ then by taking the sequence $\langle x_n \rangle_{n=1}^\infty$ with $x_n=s$ for all $n \in \mathbb{N}$ and the set $A_i$ from the partition containing $s,$ the desired analogue of Theorem \ref{fsum} becomes immediately available. 

\begin{thm} \cite[Corollary 5.9]{Dona} \label{finitepro} Let $S$ be a semigroup, let $r \in \mathbb{N}$ and let $S = \bigcup_{i=1}^r A_i$ be a partition of $S.$ There exist $i \in \{1,2, \ldots,r\}$ and a sequence $\langle x_n\rangle_{n=1}^\infty$ in $S$ such that FP$(\langle x_n\rangle_{n=1}^\infty) \subset A_i.$
\end{thm}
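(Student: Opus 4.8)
The plan is to prove the theorem by passing to the Stone--\v{C}ech compactification $\beta S$ of the discrete semigroup $S$ and exploiting its algebraic structure, following the ultrafilter method described in \cite{Dona}. First I would realise $\beta S$ as the space of ultrafilters on $S$ and extend the product to $\beta S$ by declaring, for $p,q \in \beta S$, that $A \in p \cdot q$ exactly when $\{s \in S : s^{-1}A \in q\} \in p$, where $s^{-1}A = \{t \in S : s \cdot t \in A\}$. One checks that this extends $\cdot$ on $S$ and makes $\beta S$ a compact Hausdorff right topological semigroup. By Ellis's theorem every compact right topological semigroup contains an idempotent, so I may fix $p \in \beta S$ with $p \cdot p = p$.

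Given the partition $S = \bigcup_{i=1}^r A_i$, since $p$ is an ultrafilter exactly one cell belongs to $p$; call it $A := A_i$. The whole problem then reduces to producing a sequence whose finite products all stay inside this single member $A$ of $p$. The engine for this is a combinatorial lemma about idempotents: for any $B \in p$, writing $B^\star = \{s \in B : s^{-1}B \in p\}$, the idempotency $p = p \cdot p$ forces $B^\star \in p$, and moreover $s^{-1}(B^\star) \in p$ for every $s \in B^\star$. I would establish this first, the second half following by applying the first assertion to $s^{-1}B$ and using the identity $(st)^{-1}A = t^{-1}(s^{-1}A)$.

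With the lemma in hand I would build $\langle x_n \rangle_{n=1}^\infty$ recursively, maintaining after stage $n$ the invariant that
$$E_n := A^\star \cap \bigcap_{\emptyset \neq F \subseteq \{1,\ldots,n\}} \Big(\prod_{j \in F} x_j\Big)^{-1} A^\star$$
lies in $p$, with $E_0 := A^\star$ and all products taken in increasing order of indices. Choosing any $x_{n+1} \in E_n$ guarantees that each new ordered finite product, namely $x_{n+1}$ itself and $\big(\prod_{j \in F} x_j\big)\, x_{n+1}$ for $\emptyset \neq F \subseteq \{1,\ldots,n\}$, lands in $A^\star \subseteq A$; and the lemma guarantees that the corresponding sets $(\,\cdot\,)^{-1}A^\star$ remain in $p$, so the finite intersection $E_{n+1} \in p$ and the recursion never stalls. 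The resulting sequence then satisfies $\mathrm{FP}(\langle x_n \rangle_{n=1}^\infty) \subseteq A = A_i$, as required.

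I expect the main obstacle to be bookkeeping rather than conceptual: keeping the order of factors correct in the non-commutative case and verifying at each step that the self-referential set $E_n$ is still a member of $p$. The existence of the idempotent $p$ (Ellis's theorem) and the right topological structure of $\beta S$ are the substantive inputs, but both are standard and may be quoted from \cite{Dona}; the delicate point is the correct formulation of the inductive invariant so that all ordered finite products are captured simultaneously.
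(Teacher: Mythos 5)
Your proposal is correct, and it is precisely the Galvin--Glazer idempotent-ultrafilter argument (extend the operation to $\beta S$, apply Ellis's theorem to get an idempotent $p$, prove the $B^\star$ lemma, and run the recursive construction) that constitutes the proof of \cite[Corollary 5.9]{Dona} --- which is exactly the source the paper cites for this theorem without reproducing any proof of its own. No gaps: the $B^\star$ lemma, the inductive invariant $E_n \in p$, and the increasing-index bookkeeping for the non-commutative case are all handled correctly.
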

It is implicit in the proof in \cite{Hindman} of Theorem \ref{fsum} above that a  sequence $\langle x_n \rangle_{n=1}^\infty$ with distinct terms can be chosen.  For the sake of convenience, we call a sequence $\langle z_n \rangle_{n=1}^\infty$ in any non-empty set $T$ to be {\it injective} if $z_n$'s are all distinct.  The question whether an injective sequence can be chosen in Theorem \ref{finitepro} has been considered in the literature. To give an idea, we will give some extracts from \cite{Gili} after presenting some basic concepts and results.

We first state a useful property of semigroups. 

\begin{prop}{\bf(Dichotomy)}. \label{Dichotomy} Let $S$ be a  semigroup. For $m \in S$ either $m^j, \,j =1,2, \ldots$ are all distinct or $m^j$ is an idempotent for some $j \in \mathbb{N}.$
\end{prop} 
We will say $m$ is of {\it infinite order} in the first case and of {\it finite order} in the second case; this is in accordance with the corresponding concepts in group theory.

 Let $(S,\cdot)$ be a semigroup. For $m,n \in S$ we usually write $m\cdot n = mn.$ Let $E(S)$ denote the set of idempotent elements in $S$, i.e., the set of elements $n \in S$ such that $n^2=n.$ We write $E_0(S)=E(S)\backslash \{e\};$ in case, $S$ has the identity $e.$ Let $\widetilde{S}= S \backslash E(S).$ For any element $x \in S$ and any subset $A \subset S,$ we set $x^{-1}A:= \{s \in S: xs \in A\}.$

It follows from Proposition \ref{Dichotomy} that every finite semigroup has an idempotent.

\begin{rem} \label{Remark1}

Clearly, for an element $s \in S$ of infinite order, the set $\{s^n:n \in \mathbb{N}\}$ is a subsemigroup of $S$ and thus $S$ contains a copy of $(\mathbb{N}, +).$ It does happen if  \begin{itemize}
		\item[(a)] $S$ has no idempotent, or
		\item[(b)] if $\widetilde{S}$ is a subsemigroup of $S$ (then it does happen for $\widetilde{S}$ and therefore for $S$ too).
		
		Further, we may note that for $S= (\mathbb{Z}_+,+),$ $\widetilde{S}=\mathbb{N}.$
	\end{itemize}
On the other hand, if $s$ is an element of $S$ of finite order, then $s$ generates a finite subsemigroup of $S$ containing an idempotent. 

\end{rem}

\begin{prop} \cite[Theorem 4.28]{Dona} Let $S$ be an infinite semigroup. Then $S^*:= \beta S \backslash S$ is a subsemigroup of $\beta S$ if and only if for any finite subset $F$ of $S$ and for any infinite subset $A$ of $S$ there exists a finite subset $C$ of $A$ such that $\cap_{x \in C} x^{-1}F$ is finite.
\end{prop}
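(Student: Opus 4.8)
The plan is to argue directly from the construction of the semigroup operation on $\beta S$ and from the description of $S^* = \beta S \setminus S$ as the set of free ultrafilters, treating each implication by contraposition. Two facts do all the work. First, the product on $\beta S$ is determined by
\begin{equation*}
F \in p\cdot q \iff \{x \in S : x^{-1}F \in q\} \in p,
\end{equation*}
where $x^{-1}F = \{s \in S : xs \in F\}$ in the notation recalled above. Second, an ultrafilter lies in $S^*$ exactly when it is free, equivalently when it contains no finite subset of $S$ (an ultrafilter containing a finite set must contain one of its singletons, hence be principal). Consequently $S^*$ fails to be a subsemigroup precisely when there exist $p, q \in S^*$ and a finite $F \subseteq S$ with $F \in p\cdot q$.

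For the necessity of the condition I would prove the contrapositive: if the combinatorial condition fails, then $S^*$ is not a subsemigroup. So suppose there are a finite $F$ and an infinite $A$ such that $\cap_{x \in C} x^{-1}F$ is infinite for every finite $C \subseteq A$. Then the family $\{x^{-1}F : x \in A\}$ together with the cofinite filter on $S$ has the finite intersection property, since finite intersections of the sets $x^{-1}F$ are infinite by hypothesis and intersecting an infinite set with a cofinite set again leaves an infinite set. Extend this family to an ultrafilter $q$; by construction $q$ is free, so $q \in S^*$, and $x^{-1}F \in q$ for every $x \in A$. Choose any free ultrafilter $p$ with $A \in p$, which exists because $A$ is infinite. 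Then $\{x : x^{-1}F \in q\} \supseteq A \in p$, so $F \in p\cdot q$ with $F$ finite; hence $p\cdot q \in S$ and $S^*$ is not a subsemigroup.

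For the sufficiency I would again argue contrapositively: if $S^*$ is not a subsemigroup, the condition fails. Pick $p, q \in S^*$ with $p\cdot q \notin S^*$, so that some finite $F$ lies in $p\cdot q$. Put $A = \{x : x^{-1}F \in q\}$; then $A \in p$, and $A$ is infinite because $p$ is free. For any finite $C \subseteq A$ we have $x^{-1}F \in q$ for each $x \in C$, whence $\cap_{x \in C} x^{-1}F \in q$ since filters are closed under finite intersection; as $q$ is free this intersection is infinite. Thus $F$ and $A$ witness the failure of the condition.

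I expect the only genuine subtlety to lie in the necessity direction, in the manufacture of the two free ultrafilters: one must verify that the failure of the condition produces a family with the finite intersection property and, crucially, that this family can be refined to a \emph{free} ultrafilter, which is exactly why the cofinite filter is adjoined and its compatibility checked. The sufficiency direction, by contrast, is a routine unwinding of the definition of $p\cdot q$ using closure of filters under finite intersections.
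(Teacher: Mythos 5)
Your proof is correct. The paper itself gives no proof of this proposition—it is quoted directly from Hindman and Strauss \cite[Theorem 4.28]{Dona}—and your argument, built on the two standard facts that $F \in p\cdot q$ iff $\{x \in S : x^{-1}F \in q\} \in p$ and that an ultrafilter lies in $S^*$ iff it contains no finite set, is exactly the standard proof of that theorem, with both contrapositive directions carried out soundly (including the key step of adjoining the cofinite filter to guarantee freeness of $q$).
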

This is equivalent to saying that $S$ is a moving semigroup defined by Golan and Tsaban \cite[pg. 112]{Gili} in the following way. 

A semigroup $S$ is called {\it moving} if it is infinite and, for each infinite $A \subset S$ and each finite $F \subset S,$ there exist $x_1,x_2, \ldots, x_k \in A$ such that $\{x_1s,x_2s,\ldots, x_ks\} \nsubseteq F$ for all but finitely many $s \in S.$

 It is clear that every right cancellative infinite semigroup is moving. In particular, the semigroup $(\mathbb{Z}_+,+)$ and  every infinite group is moving.

\begin{thm} \label{GGH} (Galvin-Glazer-Hindman) \cite[Theorem 1.2]{Gili}. Let $S$ be a moving semigroup. For each finite colouring of $S,$ there is an injective sequence $\langle x_n \rangle_{n=1}^\infty$ such that  FP$(\langle x_n\rangle_{n=1}^\infty)$ is monochromatic. 
\end{thm}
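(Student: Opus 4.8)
The plan is to follow the Galvin--Glazer--Hindman approach through the algebra of the Stone--\v{C}ech compactification $\beta S$, using the moving hypothesis precisely to guarantee a \emph{nonprincipal} idempotent ultrafilter, which is what forces the resulting sequence to be injective. Recall that $\beta S$, identified with the space of ultrafilters on $S$, carries a natural operation extending that of $S$, characterised by $A \in p \cdot q$ if and only if $\{x \in S : x^{-1}A \in q\} \in p$, under which $\beta S$ is a compact right-topological semigroup; every compact right-topological semigroup, and every closed subsemigroup of one, contains an idempotent (Ellis's lemma), a standard fact from \cite{Dona}.

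First I would invoke the preceding Proposition: since $S$ is moving, $S^* = \beta S \setminus S$ is a closed subsemigroup of $\beta S$. Being closed in the compact space $\beta S$, it is itself a compact right-topological semigroup, so it contains an idempotent $p = p \cdot p$. Crucially, $p \in S^*$ means $p$ is a nonprincipal ultrafilter, so every member of $p$ is an infinite subset of $S$. Given the colouring $S = \bigcup_{i=1}^r A_i$, exactly one block lies in the ultrafilter $p$; fix $A := A_i \in p$.

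Next I would establish the standard self-similarity lemma for idempotents: for $A \in p$ with $p = p \cdot p$, the set $A^\star := \{x \in A : x^{-1}A \in p\}$ again belongs to $p$, and for each $x \in A^\star$ one has $x^{-1}A^\star \in p$. The first assertion is immediate from the product rule, since $A \in p = p \cdot p$ says $\{x : x^{-1}A \in p\} \in p$; the second follows by a short computation with $x^{-1}(A^\star)$. With this in hand I would build the sequence by induction. Choose any $x_1 \in A^\star$. Having chosen distinct $x_1, \ldots, x_n$ with every finite product (taken in increasing index order) lying in $A^\star$, let $P_n$ be the finite set of all such products; since each $u \in P_n$ lies in $A^\star$, the set $B := A^\star \cap \bigcap_{u \in P_n} u^{-1}A^\star$ is a finite intersection of members of $p$, hence $B \in p$. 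Because $p$ is nonprincipal, $B$ is infinite, so I may pick $x_{n+1} \in B$ distinct from $x_1, \ldots, x_n$. Then $x_{n+1} \in A^\star$ and $u\, x_{n+1} \in A^\star$ for each $u \in P_n$, so all finite products formed from $x_1, \ldots, x_{n+1}$ again lie in $A^\star \subseteq A$. The sequence $\langle x_n \rangle_{n=1}^\infty$ is injective by construction and FP$(\langle x_n \rangle_{n=1}^\infty) \subseteq A = A_i$ is monochromatic.

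The main obstacle, and the reason the \emph{moving} hypothesis is indispensable, is securing injectivity: an arbitrary idempotent of $\beta S$ may be principal (for instance concentrated at an idempotent element of $S$), in which case the only relevant member is a single point and the construction collapses. The moving condition is exactly the device that lifts the idempotent into $S^*$, forcing every $B$ above to be infinite and thereby permitting distinct choices at each stage. A secondary point requiring care is the noncommutative bookkeeping: the products in $P_n$ must be taken in increasing order of indices, and appending $x_{n+1}$ on the right through the sets $u^{-1}A^\star$ is precisely what respects this ordering, so the right-topological structure of $\beta S$ is used essentially.
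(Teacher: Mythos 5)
Your proof is correct, and it is essentially the canonical Galvin--Glazer--Hindman argument that the cited source (Golan--Tsaban) uses: the moving hypothesis places an idempotent ultrafilter in $S^* = \beta S \setminus S$ via the quoted Proposition, nonprincipality makes every member of that ultrafilter infinite, and the $A^\star$-induction then yields an injective sequence with monochromatic finite products. Note that the paper itself offers no proof of this theorem --- it is stated as a known result with a citation to \cite{Gili} --- so there is no internal argument to compare against; your write-up supplies exactly the proof the citation points to, with the key steps (Ellis's lemma, the self-similarity lemma for $A^\star$, and the increasing-index bookkeeping in the noncommutative case) handled correctly.
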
 

\begin{defn} 
	\begin{itemize}
		\item[(i)] An infinite semigroup $S$ is called a {\it Ramsey semigroup} if the conclusion of Galvin-Glazer-Hindman Theorem \ref{GGH} holds for $S$.
		\item[(ii)] A group which is a Ramsey semigroup will be called a {\it Ramsey group}.
	\end{itemize}
	
	\end{defn}
\begin{rem} \label{Smee} \begin{itemize}
		\item[(i)] For a finite subset $F$ of $S,$ we can choose a sequence $\langle x_n \rangle_{n=1}^\infty$ as in Theorem \ref{GGH} to be in $S \backslash F$  by replacing the sequence by a subsequence if needed. 
		\item[(ii)]Let ${\bf x}=\langle x_n\rangle_{n=1}^\infty$ be a sequence in $S$ with infinite range $B.$ Clearly, $S$ is infinite. Then there exists an injective sequence ${\bf y }= \langle y_j\rangle_{j=1}^\infty = \langle x_{n_j}\rangle_{j=1}^\infty,$ a subsequence of ${\bf x},$ so that FP($\langle y_j\rangle_{j=1}^\infty) \subset $ FP$(\langle x_n\rangle_{n=1}^\infty).$ So the requirement that for any partition $\{C_i\}_{i=1}^r$ of $S$ there exist an $i \in \{1,2, \ldots,r\}$ and a sequence $\langle x_n\rangle_{n=1}^\infty$ with infinite range such that FP$(\langle x_n\rangle_{n=1}^\infty)\subset C_i,$ is equivalent to the existence of an $i \in \{1,2, \ldots,r\}$ and an injective  sequence $\langle y_n\rangle_{n=1}^\infty$  such that FP$(\langle y_n\rangle_{n=1}^\infty)\subset C_i.$ 
	\end{itemize}
	
\end{rem}

\begin{exmp} \label{copyN} 
	\begin{itemize}
		\item[(i)] Let $(S,<, \cdot)$ be  an infinite ``max"  semigroup with $m \cdot n= \text{max}\{m,n\}.$ Then $(S,<,\cdot)$ is a Ramsey semigroup.
		\item[(ii)]  Galvin-Glazer-Hindman Theorem \ref{GGH} can be restated as: every moving semigroup is a Ramsey semigroup. In particular, every infinite group is a Ramsey group. 
		\item[(iii)]  If an infinite semigroup $S$ contains a copy of $(\mathbb{N}, +)$ then $S$ is  a Ramsey semigroup. This follows  immediately from the observation that a partition of $S$ induces a partition of $\mathbb{N}$. Further, we can choose a required injective sequence from that copy of $(\mathbb{N},+).$   
		\begin{itemize}
			\item[(a)] 	It follows from Remark \ref{Remark1} (i)(b) that if $(\tilde{S}, \cdot)$ is subsemigroup of $(S, \cdot)$ then $\tilde{S}$ contains a copy of $(\mathbb{N}, +)$ and therefore, both $(S,\cdot)$  and $(\tilde{S}, \cdot)$ are Ramsey semigroups.
			\item[(b)] Thus roughly speaking, it is enough to consider periodic semigroups $S$ in the sense that every element of $S$ has finite order.  
		\end{itemize}
	
		\item[(iv)] It is known that Galvin-Glazer-Hindman Theorem \ref{GGH} cannot be extended to arbitrary semigroups as it is clear from \cite[Example 2.1]{Gili}  given by Golan and Tsaban as follows: Let $k \in \mathbb{N},$ let $S_k:=\{0, 1,2,\ldots, k-1\} \cup k\mathbb{N}+1$ be the commutative semigroup with the operation of addition modulo $k.$ It can be easily seen that $S_k$ is not Ramsey semigroup by assigning each $s \in S_k$ the colour $s  \,\text{mod}\,k.$ 
	\end{itemize} 
	\end{exmp}

\begin{thm}   \label{alram} \cite[Theorem 2.3]{Gili}.
	Let $S$ be an infinite semigroup. For each finite colouring of $S,$ there exist a sequence $\langle x_n \rangle_{n=1}^\infty$ with distinct terms and a finite subset $F$ of  FP$(\langle x_n\rangle_{n=1}^\infty)$ such that  FP$(\langle x_n\rangle_{n=1}^\infty) \backslash F$ is monochromatic.  
\end{thm}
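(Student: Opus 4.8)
The plan is to split on the dichotomy of Proposition \ref{Dichotomy} and to treat the ``generic'' and ``degenerate'' cases by entirely different mechanisms, with the finite exceptional set $F$ appearing only in the second. First I would dispose of the case in which $S$ has an element $m$ of infinite order. By Remark \ref{Remark1} the set $\{m^n : n \in \mathbb{N}\}$ is a copy of $(\mathbb{N},+)$ inside $S$, so Example \ref{copyN}(iii) applies: any finite colouring of $S$ restricts to a colouring of this copy, Hindman's Theorem \ref{fsum} yields an injective sequence there whose finite products are monochromatic, and one may take $F = \emptyset$. Thus the full (not merely almost) conclusion already holds, and it remains to treat the periodic case in which every element of $S$ has finite order.

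For the periodic case I would bring in the algebra of $\beta S$. Since $S$ is infinite, $\beta S$ is a compact right-topological semigroup and hence contains an idempotent $p$; let $A = A_i$ be the unique colour class with $A \in p$. When $p$ can be chosen non-principal, the Galvin--Glazer--Hindman construction underlying Theorem \ref{GGH} applies verbatim: setting $A^\star = \{s \in A : s^{-1}A \in p\}$ one has $A^\star \in p$ and $s^{-1}A^\star \in p$ for each $s \in A^\star$, so one inductively picks $x_{n+1}$ from the set $A^\star \cap \bigcap\{u^{-1}A^\star : u \in \mathrm{FP}(\langle x_1,\ldots,x_n\rangle)\}$, which lies in $p$ and is therefore infinite, allowing $x_{n+1}$ to be taken distinct from $x_1,\ldots,x_n$. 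This produces an injective sequence with all of $\mathrm{FP}(\langle x_n\rangle) \subseteq A$, again with $F = \emptyset$; compare the passage to an injective subsequence in Remark \ref{Smee}(ii).

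The crux, and the only place the finite set $F$ is genuinely needed, is when every idempotent of $\beta S$ is principal, i.e. lies in $E(S)$. This is precisely the regime of the non-Ramsey examples (the semigroups $S_k$ of Example \ref{copyN}(iv), or a null semigroup with a zero), where the ultrafilter construction loses all control over injectivity. Here I would instead exploit that the higher products collapse into a finite ``core''. Concretely, I would first use pigeonhole to extract an injective sequence $\langle x_n\rangle$ all of whose terms share a single colour, arranged so that every product of length at least two lands in a fixed finite subset $F_0$ of $S$ (the finite minimal ideal, playing the role of the residues $\{0,\ldots,k-1\}$ in $S_k$ or of the zero in a null semigroup). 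Then $\mathrm{FP}(\langle x_n\rangle)\setminus F_0 = \{x_n : n \in \mathbb{N}\}$ is monochromatic, and $F := F_0 \cap \mathrm{FP}(\langle x_n\rangle)$ is the required finite exceptional set.

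The main obstacle is exactly this collapse step: showing that in the all-principal periodic regime one can always pin the higher products of a suitably chosen injective monochromatic sequence inside a finite set. This is where the structure theory of $\beta S$ does the real work, since the failure of such a collapse would, by the construction of the previous paragraph, manufacture a non-principal idempotent and so return us to the already-settled case. I would therefore expect the heart of the argument to be a lemma, phrased in terms of the minimal ideal of $S$ (or of $\beta S$), asserting that absence of non-principal idempotents forces the products of any injective sequence into a finite set; everything else is pigeonhole and bookkeeping of the finitely many escaping products.
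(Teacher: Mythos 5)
You are not being compared against anything in the paper here: the paper does not prove this statement at all, it simply quotes it from Golan--Tsaban \cite{Gili}, so your proposal stands or falls on its own. Your first two cases are sound: an element of infinite order gives a copy of $(\mathbb{N},+)$ and Hindman's Theorem \ref{fsum} finishes with $F=\emptyset$; and if $\beta S$ has a non-principal idempotent $p$, then every member of $p$ is infinite, so the Galvin--Glazer construction can indeed be run injectively (this is precisely how Theorem \ref{GGH} is proved for moving semigroups). The fatal gap is the third case: the lemma you name as the heart of the argument --- that absence of non-principal idempotents forces the products of \emph{any} injective sequence into a finite set --- is false. Take $S=\{a_1,a_2,\dots\}\cup\{b_1,b_2,\dots\}\cup\{0\}$ with $a_ia_j=b_{\min(i,j)}$ for $i\neq j$, and every other product (any product involving some $b_k$ or $0$, and every square $a_ia_i$) equal to $0$. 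This is an associative, infinite, periodic semigroup in which every product of three elements is $0$; it follows that every product of three elements of $\beta S$ is the principal ultrafilter at $0$, so the only idempotent of $\beta S$ is principal. Yet the injective sequence $\langle a_n\rangle_{n=1}^\infty$ has the infinite set $\{b_n : n\in\mathbb{N}\}\cup\{0\}$ as its collection of products of length at least two. So failure of collapse does not manufacture a non-principal idempotent, and your route back to the already-settled case breaks down.

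The same example shows what is genuinely needed: the collapse statement must be existential (\emph{some} suitably placed injective sequence has its higher products confined to a finite set), and a correct proof must contain a mechanism for locating that sequence. Colour the $a_i$ red and the $b_i$ together with $0$ blue: any injective sequence containing infinitely many $a_i$'s has FP containing infinitely many red elements (its $a$-terms) and infinitely many blue ones (the pair products $b_{\min(i,j)}$), so no finite excision can make it monochromatic; one is forced to place the sequence inside $\{b_i\}$, where all higher products equal $0$ and pigeonhole finishes with $F=\{0\}$. Note also that neither of your suggested finite ``cores'' detects this: the minimal ideal of $S$ is $\{0\}$ and fails to absorb the pair products, and the idempotent structure of $\beta S$ is blind to the difference between sequences in $\{a_i\}$ and in $\{b_i\}$. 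Your outline supplies no substitute for this locating step, so the periodic, all-idempotents-principal case remains unproved.
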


\begin{lem} \cite[Lemma 3.2]{Gili}. For each finite colouring of $\bigoplus_n \mathbb{Z}_2,$ there is an infinite subgroup $H$ of $\bigoplus_n \mathbb{Z}_2$ such that $H \backslash \{0\}$ is monochromatic. 
\end{lem}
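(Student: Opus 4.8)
The plan is to realise $\bigoplus_n \mathbb{Z}_2$ as the $\mathbb{F}_2$-vector space of finitely supported sequences (equivalently, finite subsets of $\mathbb{N}$ under symmetric difference), with group operation written additively. Its nonzero elements all have order $2$, and since the group is commutative we may use the commutative form of FP, so for any sequence $\langle x_n \rangle_{n=1}^\infty$ one has $\text{FP}(\langle x_n\rangle_{n=1}^\infty) = \{\sum_{n \in F} x_n : F \text{ a non-empty finite subset of } \mathbb{N}\}$. The observation driving the whole argument is that if the $x_n$ are \emph{linearly independent} over $\mathbb{F}_2$, then the subgroup $H = \langle x_n : n \in \mathbb{N}\rangle$ they generate satisfies $H \setminus \{0\} = \text{FP}(\langle x_n\rangle_{n=1}^\infty)$ exactly, because every nonzero element of $H$ is a unique sum of finitely many distinct $x_n$'s. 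Thus the problem reduces to producing a linearly independent sequence whose FP-set is monochromatic.

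First I would invoke that $\bigoplus_n \mathbb{Z}_2$ is an infinite group and hence, by Example \ref{copyN}(ii) (which rests on the Galvin-Glazer-Hindman Theorem \ref{GGH}), a Ramsey group. Applied to the given finite colouring, this yields an injective sequence $\langle x_n \rangle_{n=1}^\infty$ with $\text{FP}(\langle x_n\rangle_{n=1}^\infty)$ monochromatic. This sequence need not be linearly independent, so the next step is to extract a linearly independent subsequence.

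Next I would run a greedy extraction. Since $\langle x_n \rangle_{n=1}^\infty$ is injective it has infinitely many distinct terms; because every finite-dimensional $\mathbb{F}_2$-subspace is finite, no finite-dimensional subspace can contain all of them. So I choose $x_{n_1} \neq 0$, and having chosen linearly independent $x_{n_1}, \dots, x_{n_j}$ spanning a subspace $V_j$ (which has only $2^j$ elements), I pick a later term $x_{n_{j+1}} \notin V_j$; then $x_{n_1}, \dots, x_{n_{j+1}}$ remain linearly independent. This produces an infinite linearly independent subsequence $\langle x_{n_j}\rangle_{j=1}^\infty$. By Remark \ref{Smee}(ii), $\text{FP}(\langle x_{n_j}\rangle_{j=1}^\infty) \subseteq \text{FP}(\langle x_n\rangle_{n=1}^\infty)$, so the FP-set of the subsequence is still monochromatic.

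Finally, I set $H = \langle x_{n_j} : j \in \mathbb{N}\rangle$. By the observation above, $H \setminus \{0\} = \text{FP}(\langle x_{n_j}\rangle_{j=1}^\infty)$, which is monochromatic, and $H$ is infinite since it contains infinitely many linearly independent elements; this is exactly the desired conclusion. The one genuinely essential point, and the step I would watch most carefully, is the identification $H \setminus \{0\} = \text{FP}$, which uses the exponent-$2$ structure in a crucial way: coefficients lie in $\{0,1\}$, so group elements correspond bijectively to finite index sets $F$, matching the FP-set term for term (this identity fails for general abelian groups). The Ramsey input itself is black-boxed from Theorem \ref{GGH}, so the real content here is the linear-algebraic passage from a monochromatic FP-set to a monochromatic punctured subgroup.
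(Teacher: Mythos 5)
Your proof is correct. One thing to note at the outset: the paper does not prove this lemma at all --- it is quoted verbatim from Golan--Tsaban \cite[Lemma 3.2]{Gili} --- so there is no internal proof to compare against; your argument is, however, essentially the standard one (and the one in the cited source): realise $\bigoplus_n \mathbb{Z}_2$ as an $\mathbb{F}_2$-vector space, apply the Galvin--Glazer--Hindman Theorem \ref{GGH} (valid since every infinite group is moving) to get an injective sequence with monochromatic FP-set, and then convert the FP-set into a punctured subgroup using the exponent-$2$ structure.

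One remark worth making: your linear-independence extraction, while correct (the greedy argument is fine, since finite-dimensional $\mathbb{F}_2$-subspaces are finite and the sequence has infinitely many distinct terms), is actually unnecessary. In an elementary abelian $2$-group, the subgroup $H$ generated by \emph{any} set of elements is exactly the set of finite sums of distinct generators together with $0$, because inverses coincide with the elements and repeated summands cancel in pairs. Hence for the original injective sequence $\langle x_n\rangle_{n=1}^\infty$ one already has $H = \mathrm{FP}(\langle x_n\rangle_{n=1}^\infty) \cup \{0\}$, so $H \setminus \{0\} \subseteq \mathrm{FP}(\langle x_n\rangle_{n=1}^\infty)$ is monochromatic, and $H$ is infinite simply because it contains the infinitely many distinct terms $x_n$. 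The independence buys you the exact equality $H \setminus \{0\} = \mathrm{FP}$ and uniqueness of representations, but the lemma only needs the inclusion and infinitude, both of which come for free. So your proof is sound but can be shortened by one full step.
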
 
Consequently, for each finite colouring of the group $\bigoplus_n \mathbb{Z}_2$ there is an infinite  almost-monochromatic subgroup $H$ of $\bigoplus_n \mathbb{Z}_2$. Golan and Tsaban \cite{Gili} studied this sort of condition  for semigroups and groups which we give below.

\begin{defn} \begin{itemize}
		\item[(i)] An infinite semigroup $S$ is called an {\it almost-strong Ramsey semigroup} if given any finite colouring of $S$ there exists an infinite almost-monochromatic subsemigroup $T$ of $S.$ 
		\item[(ii)]  An {\it almost-strong Ramsey group} can be defined by replacing semigroup and subsemigroup by group and subgroup respectively in (i) above. 
	\end{itemize}
\end{defn} 
	\begin{exmp} \label{gt}
		\begin{itemize}
			\item[(i)] Consider an infinite semigroup (group) $S$ containing an  infinite subsemigroup (subgroup) $T$ of $S.$ 
		\begin{itemize}
			\item[(a)] If $T$ is an almost-strong Ramsey  semigroup (group) then $S$ is an almost-strong Ramsey semigroup (group). In particular, if $\bigoplus_n \mathbb{Z}_2$ is contained in a group $G$ as subgroup of $G,$  then $G$ is an  almost-strong Ramsey group.
			\item[(b)] If  $S \backslash T$ is finite then the converse of the first part of (a) is true.
		\end{itemize}

			\item[(ii)] Let $k \in \mathbb{N}.$ Consider the commutative semigroup $S_k$ as  in Example \ref{copyN} (iv) above. Then $S_k$  is an almost-strong Ramsey semigroup.   To see this take any colouring $\{A_i\}_{i=1}^r$ of $S.$ Consider the sets $B_i:=A_i \cap (k\mathbb{N}+1), 1 \leq i \leq r.$ Set $\Lambda= \{i \in \{ 1,2, \ldots,r\}: B_i \neq \emptyset\},$ then $\{B_i\}_{i \in \Lambda}$ gives a colouring of $k\mathbb{N}+1.$ Now, note that at least one $B_i$ has to be an infinite subset $C$ of $k\mathbb{N}+1.$ Consider $H= \{0, 1,2,\ldots,k-1\} \cup C.$ Then $H$ is an infinite subsemigroup of $S_k$ and $H \backslash \{0, 1, 2,\ldots,k-1\}$ is monochromatic. Therefore, $H$ is almost-monochromatic. 
			
			\item[(iii)]  It is well know that $(\mathbb{N}, +)$ is not an  almost-strong Ramsey semigroup \cite[Lemma 3.8 (folklore)]{Gili}. It can be seen by considering the 2-colouring of $\mathbb{N}$ given by 
			
			$${\color{red}1} ~{\color{blue}2~ 3}~ {\color{red} 4~ 5~ 6 }~ {\color{blue} 7 ~8~ 9~ 10  }~{\color{red} 11~ 12~ 13~ 14~ 15}~ {\color{blue} 16 ~17~ 18~ 19~ 20~ 21}~ {\color{red} 22 \ldots}$$ where the length of the intervals of elements of identical colours are $1, 2, 3, \ldots$.  
			\item[(iv)] An infinite group $G$ is a {\it Tarski Monster} if, for some large prime $p,$ all proper subgroups of $G$ have cardinality $p.$ Olshanskii \cite{Ola} proved that Tarski Monsters exist for all large enough primes $p$ (see also \cite{Adi}) Clearly, Tarski Monsters are not almost-strong Ramsey groups. 
		\end{itemize}
	\end{exmp}

 \subsection{ Basics of hypergroups}
 Here we come to the relevant basics of hypergroups. We may refer to  \cite{Dunkl} and \cite{Jewett} or any other suitable sources.  
 
 In this paper we are mostly concerned with commutative discrete semiconvos or hypergroups. It is convenient to write the definition in terms of a minimal number of axioms. For instance, see (\cite[Chapter 1]{Lasser}, \cite{Alagh}).

 Let $K$ be a discrete space. Let $M(K)$ be the space of complex-valued regular Borel measures on $K.$ Let $M_F(K)$ and $M_p(K)$ denote the subset of $M(K)$ consisting of measures with finite support and probability measures respectively. Let $M_{F,p}(K)= M_F(K) \cap M_p(K).$ At times, we do not distinguish between $m$ and $\delta_m$ for any $m \in K$ because $m \mapsto \delta_m$ is an embedding from $K$ into $M_p(K).$ Here $\delta_m$ is the unit point mass at $m,$ i.e., the Dirac-delta measure at $m.$ 
  
   We begin with a map $* : K \times K \rightarrow M_{F,p}(K)$. Simple computations enable us to extend `$*$' to a bilinear map called {\it convolution}, denoted by `$*$' again, from $M(K) \times M(K)$ to $M(K).$ At times, for certain $n \in K$ we will write $q_n$ for $ \delta_n* \delta_n$ and $Q_n$ for its support.
   
    A bijective map $\vee: m \mapsto \check{m}$ from $K$ to $K$ is called an {\it involution} if $\check{\check{m}}=m.$ We can extend it to $ M(K)$ in a natural way.  
   
 \begin{defn} \label{semiconvo} A pair $(K,*)$ is called a { \it discrete semiconvo} if the following conditions hold.
 \begin{itemize}
 \item The map $* : K \times K \rightarrow M_{F,p}(K)$ satisfies the associativity condition $$ (\delta_m*\delta_n)*\delta_k = \delta_m* (\delta_n*\delta_k)\,\,\, \text{for all}\, m, n, k \in K.$$
 \item There exists a (necessarily unique) element $e \in K$ such that $$ \delta_m*\delta_e = \delta_e*\delta_m = \delta_m \,\,\,\,\text{for all}\,\, m \in K.$$ 
 \end{itemize}\end{defn}
 A discrete semiconvo $(K,*)$ is called {\it commutative} if $\delta_m*\delta_n= \delta_n*\delta_m$ for all $m,n \in K.$
 
 \begin{defn} A triplet $(K,*, \vee)$ is called a {\it discrete hypergroup} if 
 \begin{itemize}
 \item $(K,*)$ is a discrete semiconvo,
 \item  $\vee$ is an involution on $K$ that satisfies \begin{itemize}
 \item[(i)]  $(\delta_m*\delta_n \check{)}= \delta_{\check{n}}*\delta_{\check{m}}$ for all $m,n \in K$ and 
 \item[(ii)] $e \in \spt(\delta_m*\delta_{\check{n}})$ if and only if $m=n.$
\end{itemize}  
 \end{itemize} \end{defn}
 A discrete hypergroup $(K,*, \vee)$ is called {\it hermitian} if the involution on $K$ is the identity map, i.e., $\check{m}=m$ for all $m \in K.$ 
 
  Note that a hermitian discrete hypergroup is commutative.
  
   We write $(K,*)$ or $(K,*,\vee)$ as $K$ only if no confusion can arise.

 Let K be a commutative discrete hypergroup. For a complex-valued function $\chi$ defined on $K,$  we write $\check{\chi}(m):= \overline{\chi(\check{m})}$ and $\chi(m*n) = \int_K \chi\, d(\delta_m*\delta_n)$ for $m,n \in K.$ Now, define two dual objects of $K:$
 $$ \mathcal{X}_b(K)= \left\lbrace\chi \in \ell^\infty(K): \chi \neq 0, \chi(m*n)= \chi(m) \chi(n) \, \text{for all}\,\, m,n \in K \right\rbrace,$$ $$\widehat{K}= \left\lbrace \chi \in \mathcal{X}_b(K) : \check{\chi}= \chi,\,\mbox{i.e.,}\, \chi(\check{m})= \overline{\chi(m)}\,\, \text{for all} \,m \in K  \right\rbrace.$$ Each $\chi \in \mathcal{X}_b(K)$ is called a {\it character} and each $\chi \in \widehat{K}$ is called a {\it symmetric character}. With the topology of pointwise convergence, $\mathcal{X}_b(K)$ and $\widehat{K}$ become compact Hausdorff spaces. In contrast to the group case, these two dual objects need not be the same and also need not have a hypergroup structure. 

 \begin{defn}{\bf Center of hypergroup.}  Let $K$ be a discrete hypergroup. Dunkl \cite[1.6]{Dunkl} defined the center $Z(K)$ of $K$ as the set of all $x$ in $K$ such the $\spt(\delta_x*\delta_y)$ is a singleton for each $y \in K.$ 
	
	Jewett defined the maximum subgroup of $K$ \cite[10.4]{Jewett}  as the set of all $x$ in $K$ such that $\spt(\delta_x*\delta_{\check{x}})=\{e\}$ and showed that it is exactly same as $Z(K)$ \cite[10.4B]{Jewett}.
\end{defn}
 
 Now we give some examples of hypergroups.
\begin{exmp} {\bf Polynomial hypergroups:} \label{2.4} This is a wide and important class of hermitian discrete hypergroups in which hypergroup structures are defined on $\mathbb{Z}_+.$ This class contains Chebyshev polynomial hypergroups of first kind, Chebyshev polynomial hypergroups of second kind, Gegenbauer hypergroups, Hermite hypergroups,  Jacobi hypergroups, Laguerre hypergroups, etc. For more details see \cite{Dunkl, Lasser1, Bloom,Lasser}. 
 
\begin{thm} \cite[Theorem 5.3, and Lemma 5.1]{Lasser}. \label{gpositive}
	Let ${\bf P}=(P_n)_{n \in \mathbb{Z}_+}$ be an orthogonal polynomial system such that the linearization coefficients $g(n,m;k)$ occurring in $$P_n(x) P_m(x)= \sum_{k=|n-m|}^{n+m} g(n,m;k) P_k(x)$$  satisfy
	$$g(n,m;k) \geq 0,\,\,\,\,\,n,m \in \mathbb{Z}_+, \,\,\,\,\, |n-m| \leq k \leq n+m.$$ Let $*: \mathbb{Z}_+ \times \mathbb{Z}_+ \rightarrow M_{F,p}(\mathbb{Z}_+ )$ be given by $$\delta_n*\delta_m= \sum_{k=|n-m|}^{n+m} g(n,m;k) \delta_k.$$ Then
	\begin{itemize}
		\item[(i)] $K_{\mathbf{P}}=(\mathbb{Z}_+.*)$ is a hermitian discrete hypergroup, called the polynomial hypergroup   (related to $ \mathbf{P}=(P_n)_{n \in \mathbb{Z}_+})$.
		\item[(ii)] $g(n,m;|n-m|)>0$ and $g(n,m;n+m)>0.$
	\end{itemize}
	    
\end{thm}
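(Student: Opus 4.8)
The plan is to exploit the linear isomorphism between the finitely supported measure algebra and the polynomial algebra $\mathbb{C}[x]$. Recall that an orthogonal polynomial system in this setting is normalised at a point $x_0$ so that $P_n(x_0)=1$ for all $n$, with $P_0\equiv 1$, and write $k_j\ne 0$ for the leading coefficient of the degree-$j$ polynomial $P_j$. First I would define $\Phi\colon M_F(\mathbb{Z}_+)\to\mathbb{C}[x]$ by $\Phi(\delta_n)=P_n$ extended linearly; since $\deg P_n=n$ the family $\{P_n\}$ is a vector-space basis of $\mathbb{C}[x]$, so $\Phi$ is a linear isomorphism. The linearisation formula says exactly that $\Phi(\delta_n*\delta_m)=P_nP_m=\Phi(\delta_n)\Phi(\delta_m)$, i.e.\ $\Phi$ carries $*$ to ordinary polynomial multiplication.

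From this single observation most of (i) follows. Associativity of $*$ is transported through $\Phi$ from the associativity of polynomial multiplication, and commutativity from $P_nP_m=P_mP_n$. The identity is $e=0$: since $P_0\equiv 1$ we get $P_0P_m=P_m$, i.e.\ $g(0,m;m)=1$ and $g(0,m;k)=0$ otherwise, so $\delta_0*\delta_m=\delta_m$. Finite support of $\delta_n*\delta_m$ is immediate from the degree bounds $|n-m|\le k\le n+m$, and it is a probability measure because $g(n,m;k)\ge 0$ is assumed while evaluating the linearisation at $x_0$ gives $\sum_k g(n,m;k)=P_n(x_0)P_m(x_0)=1$. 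For the hermitian structure I take the involution to be the identity map; axiom (i) of a hypergroup then reduces to the commutativity already shown, and axiom (ii)---that $0\in\spt(\delta_m*\delta_n)$ iff $m=n$---is settled using part (ii) below: the support lies in $\{|m-n|,\dots,m+n\}$, so $0$ can occur only when $|m-n|=0$, and when $m=n$ the positivity $g(m,m;0)>0$ ensures that $0$ really does lie in the support.

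It thus remains to prove (ii), where the genuine content lies. The top coefficient is easy: comparing the degree-$(n+m)$ terms of $P_nP_m=\sum_k g(n,m;k)P_k$ gives $k_nk_m=g(n,m;n+m)\,k_{n+m}$, so $g(n,m;n+m)=k_nk_m/k_{n+m}\ne 0$, and since $g\ge 0$ this forces $g(n,m;n+m)>0$. For the bottom coefficient I would bring in the orthogonality measure $\mu$, the one point where the orthogonal (not merely linearising) structure is needed; here $\int P_jP_k\,d\mu=0$ for $j\ne k$ and $\int P_k^2\,d\mu=h_k>0$ by positive-definiteness. Taking $n\ge m$ so that $|n-m|=n-m$, I multiply the linearisation by $P_{n-m}$ and integrate; all terms vanish except one, giving
\begin{equation*}
g(n,m;n-m)\,h_{n-m}=\int P_nP_mP_{n-m}\,d\mu=\int P_n\,(P_mP_{n-m})\,d\mu=g(m,n-m;n)\,h_n,
\end{equation*}
where $g(m,n-m;n)$ is precisely the top coefficient of the product $P_mP_{n-m}$ (as $n=m+(n-m)$), hence positive by the step just completed. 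Therefore $g(n,m;n-m)=(h_n/h_{n-m})\,g(m,n-m;n)>0$.

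The main obstacle is exactly this bottom-coefficient inequality $g(n,m;|n-m|)>0$: unlike the top coefficient and unlike the support and associativity facts, it cannot be extracted from the degree bounds or from the non-negativity hypothesis alone, and it genuinely needs the orthogonality measure together with the strict positivity $h_k>0$. The non-negativity assumption $g(n,m;k)\ge 0$ then plays the auxiliary but indispensable role of upgrading the two ``nonzero'' conclusions to strict positivity, while the normalisation $P_n(x_0)=1$ is what makes the convolution land in $M_{F,p}(\mathbb{Z}_+)$ rather than merely in $M_F(\mathbb{Z}_+)$.
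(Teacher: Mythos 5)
Your proof is correct and complete: the identification $\delta_n\mapsto P_n$, which turns $*$ into ordinary polynomial multiplication, yields associativity, commutativity, the identity $e=0$, and the probability normalisation (via $P_n(x_0)=1$); the leading-coefficient comparison gives $g(n,m;n+m)=k_nk_m/k_{n+m}>0$; and the integral $\int P_nP_mP_{n-m}\,d\mu$ against the orthogonality measure, reducing the bottom coefficient to an already-proved top coefficient, gives $g(n,m;|n-m|)>0$, which in turn settles the support axiom $0\in\spt(\delta_m*\delta_n)\iff m=n$. The paper itself offers no proof of this statement --- it is quoted from Lasser \cite{Lasser} --- and your argument is essentially the standard one underlying that citation, so there is nothing in the paper to compare it against.
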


 This immediately gives the following corollary. 
\begin{cor} \label{subsem}
	For a subhypergroup $(L, *)$ of $(K_{\bf P},*),$  $(L,+)$ is also a subsemigroup of $(\mathbb{Z}_+,+).$
\end{cor}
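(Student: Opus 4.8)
The plan is to read the conclusion off directly from the strict positivity of the top linearization coefficient supplied by part (ii) of Theorem \ref{gpositive}. First I would recall what it means for $(L,*)$ to be a subhypergroup of $(K_{\bf P},*)$: namely that $L$ is a non-empty subset of $\mathbb{Z}_+$ that is closed under the convolution in the sense that $\spt(\delta_n*\delta_m) \subseteq L$ whenever $n,m \in L$. (Closure under the involution is automatic here, since $K_{\bf P}$ is hermitian and the involution is the identity; likewise $e=0 \in L$.) Thus the only content to extract is closure of $L$ under ordinary addition.

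Next I would invoke part (ii) of Theorem \ref{gpositive}, which gives $g(n,m;n+m)>0$ for all $n,m \in \mathbb{Z}_+$. In view of the linearization formula $\delta_n*\delta_m = \sum_{k=|n-m|}^{n+m} g(n,m;k)\,\delta_k$, the strict inequality means precisely that the largest index $n+m$ actually occurs with positive weight, so $n+m \in \spt(\delta_n*\delta_m)$.

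Finally, I would combine the two observations: for $n,m \in L$ the subhypergroup property yields $\spt(\delta_n*\delta_m)\subseteq L$, and since $n+m$ belongs to that support we conclude $n+m \in L$. This is exactly the assertion that $L$ is closed under $+$, i.e.\ that $(L,+)$ is a subsemigroup of $(\mathbb{Z}_+,+)$, as desired.

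There is no substantial obstacle here; the argument is essentially one line once part (ii) is in hand. The single point that genuinely matters is that the coefficient at the top index is \emph{strictly} positive rather than merely nonnegative: if $g(n,m;n+m)$ were allowed to vanish, $n+m$ could fail to lie in $\spt(\delta_n*\delta_m)$ and the inclusion $n+m \in L$ would no longer follow. Hence the proof is entirely driven by Theorem \ref{gpositive}(ii), and I would present it as an immediate consequence.
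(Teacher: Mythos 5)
Your proof is correct and matches the paper's intended argument exactly: the corollary is stated there as an immediate consequence of Theorem \ref{gpositive}(ii), precisely because $g(n,m;n+m)>0$ forces $n+m\in\spt(\delta_n*\delta_m)\subseteq L$ for $n,m\in L$. Your added remark that strict positivity of the top coefficient is the essential point is exactly the right emphasis.
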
 

For Illustration, we describe CP1, the {\it Chebyshev polynomial hypergroup of first kind} which arises from the Chebyshev polynomials of first kind. In fact, they define the following convolution `$*$' on $\mathbb{Z}_+:$
$$\delta_m*\delta_n= \frac{1}{2} \delta_{|n-m|}+\frac{1}{2} \delta_{n+m} \,\,\,\text{for}\,\, m,n \in \mathbb{Z}_+.$$  For any $k \in \mathbb{N},$ $K= k \mathbb{Z}_+$ with $*|_{K \times K}$ is a discrete hypergroup in its own right. 
\end{exmp}
 The {\it Chebyshev polynomial hypergroup of second kind} $(\mathbb{Z}_+, *),$ say, CP2 arises from the Chebyshev polynomials of second kind and the convolution '$*$' on $\mathbb{Z}_+$ is given by $$ \delta_m*\delta_n= \sum_{k=0}^{\text{min}\{m ,n\}} \frac{|m-n|+2k+1}{(m+1)(n+1)} \delta_{|m-n|+2k}.$$ 
 
  Further, $K= 2 \mathbb{Z}_+$ with $*|_{K \times K}$ is a discrete hypergroup in its own right.
 
\begin{exmp} \label{Dk} {\bf  Dunkl-Ramirez  Discrete hypergroups:}  Let $0 < a \leq \frac{1}{2}.$ Dunkl and Ramirez \cite{Ramirez} defined a convolution structure `$*$' on $\mathbb{Z}_+$ to make it a  hermitian hypergroup $K$. The convolution `$*$' is defined by 
	\begin{equation*}
	\delta_m*\delta_n= \begin{cases}
	\text{max}\{m,n\} & m \neq n, \\  \frac{a^n}{1-a} \delta_0+ \sum_{k=1}^{n-1} a^{n-k} \delta_k+\frac{1-2a}{1-a} \delta_n & m=n \geq 2
	\end{cases}
	\end{equation*} with $\delta_0*\delta_0=\delta_0$ and $\delta_1*\delta_1=\frac{a}{1-a} \delta_0+\frac{1-2a}{1-a} \delta_1.$
	
	The dual $H_a (=\widehat{K})$ of $K$ is a (hermitian) countable compact hypergroup and it can be identified with the one point compactification $\mathbb{Z}_+^*=\{0,1,2, \ldots, \infty\}$ of $\mathbb{Z}_+.$ 
	
	For a prime $p,$ let $\Delta_p$ be the ring of p-adic integers and $\mathcal{W}$ be its group of units, that is , $\{x=x_0+x_1p+ \cdots+ x_np^n+ \cdots \in \Delta_p : x_j = 0,1, \ldots,p-1 \, \text{for}\, j \geq 0 \, \text{and} \, x_0 \neq 0  \}$. For $a=\frac{1}{p},$ $H_a$ derives its structure from $\mathcal{W}$-orbits of action of $\mathcal{W}$ on $\Delta_p$ by multiplication in $\Delta_p.$ Further, in this case we also have $\widehat{H_a}=K.$ 
\end{exmp} 

\begin{exmp} {\bf Hypergroup deformations of semigroups at idempotents}. \label{KHD} Motivated by Discrete Dunkl-Ramirez hypergroups as in Example \ref{Dk} above, the authors attempted to make a ``max" semigroup $(S, <, \cdot)$ with the discrete topology into a hermitian discrete hypergroup by deforming the product on the diagonal. We showed  that this can be done if and only if either $S$ is finite or $S$ is isomorphic to $(\mathbb{Z}_+,<,\text{max}).$  Next, we presented a necessary and sufficient condition for a discrete semigroup to become a semiconvo or hypergroup after deforming the product at idempotents. The details are given in \cite{KKA}; here, we give an idea without proofs.

\begin{itemize}
	\item[(i)] In \cite[Section 3]{KKA}, we deformed the semigroup product of a discrete ``$\text{max}$" semigroup $(S,<, \cdot)$ along the diagonal by replacing the semigroup product `$\cdot$' by convolution product `$*$' as follows:  
	\begin{eqnarray*}
		\delta_m*\delta_n = \delta_n*\delta_m &=& \delta_{m\cdot n} (= \delta_{\text{max}\{m,n\}}) \,\,\,\,\mbox{for}\, m,n \in S\, \text{with}\, m \neq n, \, \text{or},\, m=n=e,\\ \delta_n*\delta_n &=& q_n \,\,\,\,\,\,\,\,\,\,\, \text{for} \, n \in S\backslash \{e\}. 
	\end{eqnarray*}
	Here, $q_n$ is a probability measure on $S$ with finite support $Q_n$ containing $e$ and has the form $\sum_{j \in Q_n} q_n(j) \delta_j$ with $q_n(j)>0$ for $j \in Q_n$ and $\sum_{j \in Q_n} q_n(j)=1.$
	
	We gave a set of necessary and sufficient conditions for $(S,*)$ to become a  hermitian discrete hypergroup.
	
	\begin{thm} \label{Ordered} \cite[Theorem 3.2]{KKA} Let $(S,<, \cdot)$ be a discrete (commutative) ``max" semigroup with identity $e$ and ` $*$' and other related symbols as above. Then $(S,*)$ is a hermitian discrete hypergroup if and only if the following conditions hold. 
		\begin{itemize}
			\item[(i)] Either $S$ is finite or $(S,<,\cdot)$ is isomorphic to $(\mathbb{Z}_+,<, \text{max}).$
			\item[(ii)] For $n \in S  \backslash \{e\},$ we have $\mathcal{L}_n \subset Q_n \subset \mathcal{L}_n \cup \{n\},$ where $\mathcal{L}_n:= \{j \in S: j<n\}.$ 
			\item[(iii)]  If $\#S > 2,$ then for $ e \neq m <n $ in $S,$ we have 
			\begin{itemize}
				\item[(a)] $q_n(e)=q_n(m)q_m(e)$ and 
				\item[(b)] $q_n(e) \left( 1+ \sum_{e \neq k \in \mathcal{L}_n} \frac{1}{q_k(e)} \right) \leq 1;$ 
			\end{itemize} 
			or, equivalently, with $v_n= \frac{1}{q_n(e)}$ for $n \in S,$
			\item[(iii)']  If $\#S > 2,$ then for $ e \neq m <n $ in $S,$ we have 
			\begin{itemize}
				\item[(a)]$q_n(m)= \frac{v_m}{v_n}$ and 
				\item[(b)] $\sum_{k \in \mathcal{L}_n}v_k \leq v_n.$ 
	\end{itemize} \end{itemize} \end{thm}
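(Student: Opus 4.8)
The plan is to reduce the whole equivalence to a single axiom and then read off the three conditions from well-chosen triple products. Since the construction keeps $\delta_m*\delta_n=\delta_{\max\{m,n\}}$ for $m\neq n$ and sets $\delta_n*\delta_n=q_n$, the commutativity of $*$ and the identity involution are built in; the identity axiom holds because $e$ is the least element of the chain $(S,<)$, and the axiom ``$e\in\spt(\delta_m*\delta_{\check n})\iff m=n$'' is automatic, since for $m\neq n$ the left-hand support is $\{\max\{m,n\}\}\not\ni e$ while for $m=n$ one has $e\in Q_n$ by hypothesis. Thus $(S,*)$ is a hermitian discrete hypergroup if and only if $*$ is associative, and the entire theorem becomes an analysis of $(\delta_a*\delta_b)*\delta_c=\delta_a*(\delta_b*\delta_c)$.

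For necessity I would test associativity on three families of triples. First, the triple $(n,n,k)$ with $k>n$ forces $q_n*\delta_k=\delta_k$; expanding the left side shows that a point $j\in Q_n$ with $j>k$ would leave mass above $k$ and that $j=k$ would create a secondary term $q_k$, so every point of $Q_n$ must be $\leq n$, giving the upper bound $Q_n\subset\mathcal{L}_n\cup\{n\}$ (when $S$ is finite and $n$ is its top element there is no such $k$, but then $\mathcal{L}_n\cup\{n\}=S$ and the bound is vacuous). Second, the triple $(m,n,n)$ with $m<n$ gives $q_n=\delta_m*q_n$, and matching the coefficient at each $i<m$ yields the multiplicativity relations $q_n(i)=q_n(m)\,q_m(i)$; in particular, if some $m<n$ were missing from $Q_n$ then $q_n(m)=0$ would force $q_n(i)=0$ for all $i<m$, and since $e<m$ this contradicts $e\in Q_n$. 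Hence $\mathcal{L}_n\subset Q_n$, completing (ii), and the case $i=e$ is exactly (iii)(a). Because $Q_n$ is finite and now contains $\mathcal{L}_n$, every element has only finitely many strict predecessors; a linear order with least element in which each element has finitely many predecessors is order-isomorphic to an initial segment of $\mathbb{Z}_+$ (via $n\mapsto|\mathcal{L}_n|$), which is (i). Finally, writing $v_n=1/q_n(e)$, relation (iii)(a) rephrases as $q_n(m)=v_m/v_n$, and the requirement that $q_n$ be a probability measure, $\sum_{j\leq n}q_n(j)=1$ with $q_n(n)\geq 0$, becomes exactly $\sum_{k\in\mathcal{L}_n}v_k\leq v_n$, which is (iii)(b) in the form (iii)$'$(b).

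For sufficiency I would run the converse bookkeeping. Assuming (i)--(iii), condition (i) makes each $\mathcal{L}_n$ finite, and setting $q_n(j)=v_j/v_n$ for $j<n$ together with $q_n(n)=1-v_n^{-1}\sum_{j<n}v_j$ produces, by (iii)(b), a genuine finitely supported probability measure with $\mathcal{L}_n\subset Q_n\subset\mathcal{L}_n\cup\{n\}$. It then remains to verify associativity of the extended bilinear $*$. Reducing to point masses, the triple $(a,b,c)$ with $a,b,c$ all distinct is immediate since both sides equal $\delta_{\max\{a,b,c\}}$; the genuinely interactive cases are the two ``two-equal'' families $(a,a,c)$ and $(a,c,c)$, where one side is a plain maximum and the other a convolution of $q_a$ or $q_c$ against a point mass. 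Splitting the support according to whether its points lie below, at, or above the third element and matching coefficients reduces everything to the two facts already isolated in the necessity half: the multiplicativity $q_a(i)=q_a(c)q_c(i)$ and the normalization $\sum_{j\leq c}q_c(j)=1$. This is precisely where a secondary deformed product $q_c$ surfaces inside $q_a*\delta_c$ when $c\in Q_a$, and I expect the careful tracking of that secondary term, together with the coefficient matching at the point $c$ itself, to be the main technical obstacle; the remaining cases are forced by the structure established above.
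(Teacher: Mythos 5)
This paper never actually proves Theorem \ref{Ordered}: it is imported verbatim from \cite[Theorem 3.2]{KKA}, and the surrounding text explicitly says the details are given there, so there is no in-paper proof to compare your argument against; I can only judge the proposal on its own merits, and on those merits it is sound. The opening reduction is correct: commutativity is built into the definition of $*$, the identity axiom holds because $e$ is the minimum of the chain, and the hermitian support axiom ($e\in\spt(\delta_m*\delta_{\check n})$ iff $m=n$) is automatic precisely because the setup demands $e\in Q_n$; so everything does hinge on associativity. Your necessity tests are the right ones and the deductions check out: the triple $(n,n,k)$ with $k>n$ gives $Q_n\subset\mathcal{L}_n\cup\{n\}$ (no cancellation can occur since all coefficients are nonnegative, and a point $k\in Q_n$ would inject positive mass at $e$ through the secondary term $q_k$); the triple $(m,n,n)$ with $e\neq m<n$ gives $q_n(i)=q_n(m)q_m(i)$ for all $i<m$, whence $m\in Q_n$ (else $q_n(e)=0$, contradicting $e\in Q_n$) and (iii)(a); condition (i) follows because $\mathcal{L}_n\subset Q_n$ forces each $\mathcal{L}_n$ to be finite, and a chain with least element in which every element has finitely many predecessors is an initial segment of $\mathbb{Z}_+$ via $n\mapsto\#\mathcal{L}_n$; and (iii)$'$(b) is exactly the normalization $\sum_j q_n(j)=1$ together with $q_n(n)\geq 0$. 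For sufficiency, the coefficient matching you flag as ``the main technical obstacle'' does close exactly as you predict: for $e<c<a$ one computes
\begin{equation*}
q_a*\delta_c=\Bigl(\sum_{j<c}q_a(j)\Bigr)\delta_c+q_a(c)\,q_c+\sum_{j\in Q_a,\,j>c}q_a(j)\,\delta_j,
\end{equation*}
and with $q_x(j)=v_j/v_x$ for $j<x$ (which is what (ii) and (iii)(a) say) the coefficient at each $i<c$ matches by multiplicativity, while the coefficient at $c$ itself matches because $\sum_{j\leq c}q_c(j)=1$; the subcase $c>a$ needs only the upper bound $Q_a\subset\mathcal{L}_a\cup\{a\}$. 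Two small points you should add to make the case analysis airtight: the patterns $(a,c,a)$ and $(a,a,a)$ also occur, but both are trivially associative by commutativity of the bilinear extension; and your two families $(a,a,c)$ and $(a,c,c)$ are really a single family, since commutativity makes associativity of $(x,y,z)$ equivalent to that of $(z,y,x)$. With those remarks supplied, your outline is a complete and correct proof.
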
 
\item[(ii)] In \cite[Section 4]{KKA}, we try to make $(S,\cdot)$ into a commutative discrete semiconvo or discrete hypergroup $(S,*)$ by deforming the product on $\mathcal{D}_{E_0(S)}:=\{(m,m): m \in E_0(S)\},$ the diagonal of $E_0(S),$ or the idempotent diagonal of $S,$ say.

Let $(S,\cdot)$ be a semigroup with identity $e$. 
	  A non-empty subset $T$ of $S$ is called an {\it ideal} in $S$ if $TS \subset T$ and $ST \subset T,$ where $TS := \{ts \,: t \in T, s\in S\}$ and similarly for $ST.$ Let $G(S)$ denote the set $\{g \in S: \exists\, h \in S \,\,\text{with}\,\, gh=hg=e\}.$ Then $G(S)$ is a group contained in $S$ called the {\it maximal group}. Set $G_1(S)= \{g \in G(S): \, gm=m \, \text{for all} \, m \in E_0(S)\}.$ Clearly, $G_1(S)$ is a subgroup of $G(S).$  Note that members of $G_1(S)$ act on $E_0(S)$ as the identity via left multiplication of $(S, \cdot).$  $(S, \cdot)$ is called {\it action-free} if $G_1(S)= \{e\}.$

For $n \in E(S),$ let $q_n$ be a probability measure on $S$ with finite support $Q_n$ containing $e.$ We express $q_n= \sum_{j \in Q_n} q_n(j) \delta_j$ with $q_n(j)>0$ for $j \in Q_n$ and $\sum_{j \in Q_n} q_n(j)=1.$ We look for necessary and sufficient conditions on $S$ and $ \{ q_n: n \in E_0(S)\}$ such that $(S, *)$ with `$*$' defined below, is a commutative discrete semiconvo:
\begin{eqnarray*}
	\delta_m*\delta_n =  \delta_n*\delta_m & = &\delta_{mn} \,\,\,\,\mbox{for}\,\, (m,n) \in S\times S \backslash \mathcal{D}_{E_0(S)},   \\
	\delta_n*\delta_n &= &q_n \,\,\,\,\mbox{for}\,\, n \in E_0(S). 
\end{eqnarray*} 
\begin{thm} \label{main} \cite[Theorem 3.8]{KKA} Let $(S,\cdot)$ be a  commutative discrete semigroup with identity $e$ such that $S$ is action-free. Let `$*$' and other related notation and concepts be as above. Then $(S, *)$ is a commutative discrete semiconvo if and only if the following conditions hold.
	\begin{itemize}
		\item[(i)]$E(S)$ is finite or $E(S)$ is isomorphic to $(\mathbb{Z}_+,<,\text{max}),$ where the order on $E(S)$ is defined by $m<n$ if $mn=n \neq m.$
		\item[(ii)] $(\widetilde{S}, \cdot)$ is an ideal of $(S, \cdot).$  
		\item[(iii)] $Q_n \subset E(S)$ for $n \in E_0(S).$
		\item[(iv)] If $n \in E_0(S)$ and $m \in \widetilde{S}$ then $Q_n\cdot m =\{nm\}.$  
		\item[(v)] For $n \in E_0(S),$ we have $\mathcal{L}_n \subset Q_n \subset \mathcal{L}_n \cup \{n\},$  where for $n \in E(S),$ $\mathcal{L}_n := \{j \in E(S): j<n\}.$ 
		\item[(vi)]If $\#E(S) > 2,$ then for $ e \neq m <n $ in $E(S),$ we have the following:
		\begin{itemize}
			\item[($\alpha$)] $q_n(e)=q_n(m) q_m(e)$ and 
			\item[($\beta$)] $q_n(e) \left( 1+ \sum_{e \neq k \in \mathcal{L}_n} \frac{1}{q_k(e)} \right) \leq 1.$
		\end{itemize}
		
	\end{itemize}
	Further, under these conditions, $E(S)$ is a hermitian discrete hypergroup. Moreover, $S$ is a hermitian discrete hypergroup if and only if $S = E(S).$
\end{thm}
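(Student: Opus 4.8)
The plan is to reduce the whole statement to the associativity identity. By construction $*$ is commutative, and $e$ is a two-sided identity because $(e,e)$ and every $(m,e)$ lie outside $\mathcal{D}_{E_0(S)}$, so $\delta_m*\delta_e=\delta_{me}=\delta_m$. Thus the only content of the first axiom of a semiconvo is the associativity $(\delta_m*\delta_n)*\delta_k=\delta_m*(\delta_n*\delta_k)$. The decisive observation is that $*$ agrees with the bilinear extension of the semigroup map $(m,n)\mapsto\delta_{mn}$ \emph{except} at the deformed self-products $\delta_n*\delta_n=q_n$ with $n\in E_0(S)$; hence associativity can fail only when one bracketing is forced to evaluate a self-product $\delta_a*\delta_a$ of an idempotent $a\neq e$ that the other bracketing avoids. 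The entire theorem is therefore a careful accounting of exactly when such self-products are triggered, and the five nontrivial conditions are precisely what is needed to keep that accounting consistent.

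\textbf{Sufficiency.} Assuming (i)--(vi), I would verify associativity through a dichotomy governed by (ii). If $m,n,k\in E(S)$, then, since a product of idempotents is idempotent and $Q_n\subset E(S)$ by (iii), the restriction of $*$ to $E(S)$ is $*$-closed; moreover $(E(S),\cdot)$ is a commutative idempotent monoid which by (i) is a ``max'' semigroup, and conditions (i),(v),(vi) are word-for-word the hypotheses (i),(ii),(iii) of Theorem \ref{Ordered} for $E(S)$. Hence $(E(S),*)$ is a hermitian discrete hypergroup, which simultaneously settles the penultimate assertion of the theorem and gives associativity whenever all three arguments are idempotent. If instead some argument lies in $\widetilde S$, then (ii) (that $\widetilde S$ is an ideal) forces every product meeting $\widetilde S$ to remain in $\widetilde S$; the contrapositive says a product $\delta_{mn}$ can be a nonidentity idempotent only if both $m,n\in E(S)$, which returns us to the previous case. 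Consequently, in this regime \emph{no} downstream self-product arises, and the only possible deformation is the immediate one, namely $m=n\in E_0(S)$ on the left (or $n=k\in E_0(S)$ on the right) with the remaining element in $\widetilde S$. Condition (iv) then absorbs it: $q_m*\delta_k=\sum_{j\in Q_m}q_m(j)\,\delta_{jk}=\delta_{mk}$, matching the undeformed bracketing $\delta_m*\delta_{mk}=\delta_{mk}$, so both sides equal $\delta_{mnk}$.

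\textbf{Necessity.} Assuming $(S,*)$ is a commutative discrete semiconvo, I would extract the conditions by feeding chosen triples into associativity. Evaluating $(\delta_n,\delta_n,\delta_m)$ for $n\in E_0(S)$ gives $q_n*\delta_m=\delta_n*\delta_{nm}$, and comparing supports (distinguishing whether $nm=n$ and whether further deformation intervenes) yields the collapse $jm=nm$ for $j\in Q_n$ together with $Q_n\subset E(S)$; these are (iv) and (iii). The ideal property (ii) is obtained by the same technique: a nonidempotent $m$ with $m^2\in E_0(S)$ is ruled out because associativity of $(\delta_m,\delta_m,\delta_{m^2})$ would force $q_{m^2}=\delta_{m^2}$, impossible since $e\in Q_{m^2}$; the remaining invertible elements that multiply an idempotent out of $\widetilde S$ are excluded using the standing action-free hypothesis $G_1(S)=\{e\}$. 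With (ii),(iii) in hand, $*$ restricts to a ``max''-semigroup deformation on $E(S)$, and the necessity half of Theorem \ref{Ordered} returns (i),(v),(vi). I expect this direction, and in particular the correct use of action-freeness to derive (ii), to be the main obstacle, since it requires verifying that every potential failure of the ideal property is detected by some associativity triple.

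\textbf{The final equivalence.} If $S=E(S)$, then $S$ is a hermitian discrete hypergroup by the sufficiency argument applied to $E(S)$. Conversely, suppose $(S,*)$ is a hermitian discrete hypergroup. The hypergroup axiom $e\in\spt(\delta_m*\delta_{\check m})\iff m=m$ forces $e\in\spt(\delta_m*\delta_m)$ for every $m$; for $m\in\widetilde S$ one has $\delta_m*\delta_m=\delta_{m^2}$, so this means $m^2=e\in E(S)$. But by (ii) the set $\widetilde S$ is an ideal, whence $m^2=m\cdot m\in\widetilde S$, contradicting $e\notin\widetilde S$. Therefore $\widetilde S=\emptyset$, i.e.\ $S=E(S)$, completing the proof.
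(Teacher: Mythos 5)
A preliminary remark: this theorem is not proved in the paper at all. It is quoted as background from \cite{KKA}, with the explicit disclaimer ``The details are given in \cite{KKA}; here, we give an idea without proofs.'' So there is no in-paper proof to compare with, and your proposal must be judged on its own merits. Its architecture is sensible and surely parallels \cite{KKA}: commutativity and the identity axiom are immediate, associativity is the only issue, deformation happens only on the idempotent diagonal, the all-idempotent case is Theorem \ref{Ordered} applied to $E(S)$ (conditions (i),(v),(vi) of Theorem \ref{main} being (i),(ii),(iii) there), and the mixed cases are absorbed by (ii)--(iv). Your sufficiency half is essentially complete, with one caveat: read literally, condition (i) with $E(S)$ finite does not say that $E(S)$ is totally ordered, and Theorem \ref{Ordered} cannot be invoked for a semilattice that is not a chain. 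Indeed, for incomparable idempotents $m,n$ associativity genuinely fails: $(\delta_m*\delta_n)*\delta_n=\delta_{mn}$, while $\delta_m*(\delta_n*\delta_n)=\delta_m*q_n$ carries mass $q_n(e)$ at $m\neq mn$. So chain-ness of $E(S)$ must either be read into (i) (as intended in \cite{KKA}) or be derived; your step ``which by (i) is a max semigroup'' silently assumes it.

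The genuine gap is the necessity direction, exactly where you predicted trouble, and it is more than a matter of detail. A violation of (ii) is an arbitrary pair $m\in\widetilde{S}$, $s\in S$ with $ms\in E(S)$; you treat only $s=m$ (your triple $(\delta_m,\delta_m,\delta_{m^2})$ argument is correct) and wave at action-freeness for ``invertible elements.'' The case you do not cover is $m\in\widetilde{S}$ non-invertible with $mf=f$ for some $f\in E_0(S)$. There the useful triple is $(\delta_f,\delta_f,\delta_m)$: associativity gives $\sum_{j\in Q_f}q_f(j)\delta_{jm}=q_f$, i.e.\ the pushforward of $q_f$ under multiplication by $m$ equals $q_f$; comparing the mass at $e$ (positive, since $e\in Q_f$) forces $jm=e$ for some $j\in Q_f$, hence $m\in G(S)$, and only then does action-freeness finish: for invertible $m$, the same triple with any $f\in E_0(S)$ satisfying $fm\neq f$ yields $m=fm$ and hence $f=e$, a contradiction, so $m\in G_1(S)=\{e\}$. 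Similarly, (iii) does not ``drop out of comparing supports'' in $(\delta_n,\delta_n,\delta_m)$: for $j_0\in Q_n\cap\widetilde{S}$ those triples only give $nj_0=j_0$, which is not yet a contradiction; one must combine the (iv)-type collapse $jm=nm$ (valid for $m\in\widetilde{S}$, using (ii)) with the particular choice $m=j_0$ to get $j_0^2=nj_0=j_0$, contradicting $j_0\notin E(S)$. Note that this forces an order of derivation ((ii) before (iii)/(iv), then chain-ness of $E(S)$ and the necessity half of Theorem \ref{Ordered} for (i),(v),(vi)) which your sketch does not organize. By contrast, your handling of the two closing assertions (``Further'' via Theorem \ref{Ordered}, and ``Moreover'' via the hermitian axiom $e\in\spt(\delta_m*\delta_m)$ against the ideal property) is correct as written.
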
	
	\end{itemize}
\end{exmp}

\begin{exmp} \label{ocd} Here, we rewrite some of  the results of \cite[Section 8]{Jewett} for the case of discrete groups. 
 
 \begin{defn} \cite[Section 8.1]{Jewett}
 	Let $G$ be a discrete group. A mapping $\varphi: G \rightarrow G $ is called { \it affine} if there exist $y \in G$ and an automorphism $\psi$ of $G$ such that $\varphi(x)=\psi(x)\,y.$ An {\it affine action} of a discrete group $H$ on a discrete group $G$ is an action  $(x,s) \mapsto x^s$ for which each mapping $x \mapsto x^s$ from $G$ to $G$ is affine.  
 \end{defn}

 \begin{itemize}
 	\item[(i)] {\bf Discrete orbit semiconvos}. \cite[Theorem 8.1 B]{Jewett}.  Let $G$ be a discrete group and let $H$ be a finite group with $\#H=c.$ Suppose that $(x,s) \mapsto x^s$ is an affine action of $H$ on $G$. Then the space $G^H $ of orbits $x^H$ given by $x^H= \{x^s: s \in H \}$ equipped with the discrete topology is a semiconvo with respect to the convolution `$*$' defined by 
 	$$ \delta_{x^H}*\delta_{y^H}= \frac{1}{c^2} \sum_{s,t \in H} \delta_{(x^sy^t)^H}.$$ 
 	
 	\item[(ii)] {\bf Discrete coset semiconvos.} \cite[Theorem 8.2 A]{Jewett}.  Let $G$ be a discrete group and let $H$ be a finite subgroup of $G$ with $\#H=c.$  If the action of $H$ on $G$ is given by $(x,s) \mapsto xs$ then it is an affine action and the orbit space $G^H$ is the right coset space $G/H:=\{xH: x \in G\}.$ The space $G/H,$ with the operation `$*$' given by 
 	$$ \delta_{xH}*\delta_{yH}= \frac{1}{c} \sum_{s \in H} \delta_{xsyH}$$ is a semiconvo.
 	
 	\item[(iii)] {\bf Discrete double coset hypergroups.} \cite[Theorem 8.2 B]{Jewett}.  Let $G$ be a discrete group and let $H$ be a finite subgroup of $G$ with $\#H=c.$ If the action of $H \times H$ on $G$ is given by $(x,(s,t)) \mapsto s^{-1}xt$ then it is an affine action and  the orbit space $G^{H \times H}$ is the double coset space $G//H:=\{HxH: x \in G\}.$ The space $G//H,$ with the operation `$*$' given by 
 	$$ \delta_{HxH}*\delta_{HyH}= \frac{1}{c} \sum_{t \in H} \delta_{HxtyH}$$ is a discrete commutative hypergroup with the identity $HeH=H$ and  the involution $(HxH \check{)} = Hx^{-1}H.$
 	\item[(iv)] {\bf Discrete automorphisms orbit hypergroups.} \cite[Theorem 8.3 A]{Jewett}. Let $G$ be a discrete group  and let $H$ be a finite subgroup of the  group of automorphisms of $G$ with $\#H=c.$ Suppose that $(x,s) \mapsto x^s$ is the corresponding  action of $H$ on $G$. Then the space $G^H $ of orbits $x^H$ given by $x^H= \{x^s: s \in H \}$ equipped with the discrete topology is a discrete hypergroup with respect to the convolution `$*$' defined by 
 	$$ \delta_{x^H}*\delta_{y^H}= \frac{1}{c} \sum_{s \in H} \delta_{(x^sy)^H}$$  
 	with the identity $e^H= \text{id}_G$ and the involution $(x^H \check{)}= (x^{-1})^H.$
 	\end{itemize}
	\end{exmp} 
\begin{exmp} \label{Kenthm}   
	
	 Jewett \cite[14.2]{Jewett} proved that if we take $H$ to be a compact subhypergroup of a hypergroup $K$ then the space of double cosets  $K//H$ can be made into a hypergroup. But the second author \cite{Ken} considered a commutative hypergroup $K$ and a closed subgroup $H$ of $Z(K)$ and  defined a different convolution product on K//H to make it a hypergoup. In fact, he studied  many properties of $K//H$ by assuming $K//H$ to be compact. An element of $K//H$ is denoted by $[x],$ the double coset of $x \in K.$ Let $\pi$ denote natural surjection $x \mapsto [x]$ from $K$ onto $K//H.$ 
	   
	   Here, we rewrite some of  the results of \cite[Section 4]{Ken} for the case of a commutative discrete hypergroup $K$ and a subgroup $H$ of $Z(K)$. We refine \cite[Theorem 4.1]{Ken}, whose proof in \cite{Ken} does not require $K//H$ to be compact (finite in our case). 
	
		{\it Let $K$ be a commutative discrete hypergroup and let $H$ be a subgroup of $K.$ Then $K//H$  is a hypergroup with the convolution `$*$' defined by, for $x,y \in K,$ $f$ a complex valued function on $K//H$ vanishing at infinity,}
		\begin{equation} \label{kencon}
		\int_{K//H} f\, d(\delta_{[x]}*\delta_{[y]})= \int_K f\circ \pi(u)\, d(\delta_x*\delta_y)(u).
		\end{equation}
	
	Following \cite{Ken}, we use the following notational conventions: if $x \in K$ and $z \in Z(K),$  the unique element in $\spt(\delta_x*\delta_z)$ is denoted by $xz$ only. Also, as $K$ is commutative, the double cosets in $K//H$ all have the form $xH=\{xz:z \in H\}.$ 
	\end{exmp}

 \section{Ramsey theory for hypergroups} \label{r3}
 We begin with motivation for the development of this paper.
 \subsection{Motivation for Ramsey theory on hypergroups}  We start with polynomial hypergroups as in Example \ref{2.4}.  
 
 \subsubsection{Motivation through polynomial hypergroups} \label{mp} We freely use Example \ref{2.4} . It seems natural that the finite sums in $\mathbb{Z}_+$ will be replaced by the supports of the convolution of finitely many unit point mass measures on $K_{\mathbf{P}}$. 
 
  Let $\mathbf{x}= \langle x_n\rangle_{n=1}^\infty$ be an injective sequence in $\mathbb{Z}_+$ with  range $B.$ For a non-empty finite subset $F$ of $B$ i.e., $F= \{x_{n_j} \,: 1 \leq j \leq m\},$  we set $\delta_F= \delta_{x_{n_1}}*\delta_{x_{n_2}}*\cdot \cdot \cdot*\delta_{x_{n_m}}.$
  
  Let $\{A_i\}_{i=1}^r$ be a partition of $\mathbb{Z}_+.$ We would like that there must be an injective sequence $\langle x_n\rangle_{n=1}^\infty$ with range $B$ and  an $i \in \{1,2, \ldots,r\}$ such that $\sup(\delta_F) \subset A_i$ for all finite subsets $F$ of $B.$
 
 Now, consider the Chebyshev polynomial hypergroup of second kind (CP2) as in Example \ref{2.4}. Take the finite partition  $\{A_{i}\}_{i=1}^3$ where  $A_{i}:=\{n \in \mathbb{Z}_+: n \equiv i-1\,\text{mod} \,3\}.$ Take any injective sequence $\langle x_n\rangle_{n=1}^\infty$ in $\mathbb{N}.$ Since $x_n$'s are distinct, we may take it to be strictly increasing. Then, for $k \in \mathbb{N},$  $1 \leq x_{k}<x_{k}+1 \leq x_{k+1}.$  By choosing $F:=\{x_k, x_{k+1}\},$ we get $\spt(\delta_F) \nsubseteq A_i $ for any $i$ as the support $\spt(\delta_{x_k}*\delta_{x_{k+1}})$ contains two or more elements starting from $x_{k+1}-x_{k}$ to $x_{k+1}+x_k$ with the consecutive differences of $2$ while every $A_i$ contains elements with the difference of multiples of $3.$   Therefore, the situation is different in the setting of hypergroups. It becomes essential and interesting to explore Ramsey theory for hypergroups in detail.
 
 \subsubsection{Motivation through hypergroup deformations of semigroups} \label{3.1.2}  
 It is clear from the definition of the convolution product `$*$' as in Example \ref{KHD}(i) of  hypergroup deformations $(S,*)$ of the semigroup $ (S, \cdot)= (\mathbb{Z}_+,<,\text{max})$ that we can have an analogue of Theorem \ref{finitepro} and Theorem \ref{GGH} for $(S,*)$ with some appropriate changes in notation and terminology as indicated in Subsection \ref{mp} above. We elaborate as follows.  
 
 Let $\mathbf{x}= \langle x_n\rangle_{n=1}^\infty$ be an injective sequence in $\mathbb{N}$ with range $B.$  For a non-empty finite subset $F$ of $B$, i.e., $F= \{x_{n_j} \,: 1 \leq j \leq m\}$, set $\delta_F= \delta_{x_{n_1}}*\delta_{x_{n_2}}*\cdot \cdot \cdot*\delta_{x_{n_m}}.$

 In fact, take a partition $(A_i)_{i=1}^r$ of $\mathbb{Z}_+.$ Then at least one of the $A_i$'s is infinite. In case $A_i$ has identity $e=0$, we replace $A_i$ by $\widetilde{A_i}= A_i\backslash \{e\},$ otherwise we redesignate $A_i$  by $\widetilde{A_i}$. Then $\widetilde{A_i}$ has an injective sequence $\langle x_n\rangle_{n=1}^\infty$ with range $B$ so that all finite products from $\langle x_n\rangle_{n=1}^\infty$ in $(\mathbb{Z}_+,<,\text{max})$ are in $A_i$ by Example \ref{copyN} (i).  Now, for this set and sequence; for any finite subset $F= \{x_{n_j}: 1 \leq j \leq m\}$ of $B,$ $\delta_F$  becomes  $\delta_{y}$ where $y=\underset{1 \leq j \leq m}{\prod} x_{n_j}= \underset{1\leq j \leq m}{\text{max} }x_{n_j}$ and thus $\spt(\delta_F) \subseteq A_i.$\\
 
  This motivates us to study Ramsey theory in the context of general discrete hypergroups or semiconvos.
 
 \subsection{Ramsey principle for hypergroups} \label{4.2} Benjamin Willson \cite{Willson} generalized the concept of colouring in the setting of hypergroups in terms of configurations and  configuration equations. That suggested an idea to formulate the following concepts.
 
 Let $(K, *)$ be an infinite discrete semiconvo. Let $\mathbf{x}= \langle x_n\rangle_{n=1}^\infty$ be an injective sequence in $K \backslash \{e\}.$ We denote its range by $B.$ For a non-empty finite subset $F$ of $B,$ we first write it in its increasing indices form, i.e., $F= \{x_{n_j} \,: 1 \leq j \leq m\}$ with $1 \leq n_1<n_2< \cdots< n_m.$ Next, we set $\delta_F= \delta_{x_{n_1}}*\delta_{x_{n_2}}*\cdot \cdot \cdot*\delta_{x_{n_m}}.$  
 
 Let $\mathbf{x}= \langle x_n\rangle_{n=1}^\infty$ be an injective sequence in $K \backslash \{e\}$ with range $B.$ Set 
 \begin{eqnarray*}
 \text{FC}(\langle x_n\rangle_{n=1}^\infty) &:=& \{\delta_{x_{n_1}}*\delta_{x_{n_2}}*\cdot \cdot \cdot*\delta_{x_{n_m}}: n_1<n_2< \cdots< n_m,\,  m \geq 1 \} \\&=& \{\delta_F: F \,\text{is a non-empty finite subset of}\, B \}
 \end{eqnarray*}  
and 
\begin{eqnarray*}
	\text{SFC}(\langle x_n\rangle_{n=1}^\infty) &:=& \{\spt(\delta_{x_{n_1}}*\delta_{x_{n_2}}*\cdot \cdot \cdot*\delta_{x_{n_m}}): n_1<n_2< \cdots< n_m,\,  m \geq 1 \} \\&=& \{\spt(\delta_F): F \,\text{is a non-empty finite subset of}\, B \}.
\end{eqnarray*}

 \begin{defn} \label{def3.1} (i) Let $(K, *)$ be an infinite discrete semiconvo. $(K,*)$ will be called a  {\it Ramsey semiconvo} if for every partition $K= \bigcup_{i=1}^r C_i,$ there exist $i$ and an injective sequence $\mathbf{x}= \langle x_n \rangle_{n=1}^\infty$ in $K \backslash \{e\}$ such that $\spt(\delta_F) \subset C_i,$ i.e., $\delta_F(C_i)=1$ for every non-empty finite subset $F \subset B.$ In other words, $	\text{SFC}(\langle x_n\rangle_{n=1}^\infty) \subset \mathcal{P}(C_i).$
 	
 	(ii) If $(K,*,\vee)$ is an infinite discrete hypergroup such that $(K,*)$ is a Ramsey semiconvo, then $(K,*,\vee)$ will be called a {\it Ramsey hypergroup}.
 \end{defn} 

\begin{rem} \label{4.3iii} If an infinite discrete subsemiconvo $L$ of a semiconvo $K$ is Ramsey then $K$ is Ramsey. To see this, take a partition  $\{C_i\}_{i=1}^r$ of $K$ and set $\Lambda := \{i \in \{1,2, \ldots,r\}: \widetilde{C_i}=C_i \cap L \neq \phi\}.$ Then $L= \bigcup_{j \in \Lambda} \widetilde{C_j}$ is a partition of $L.$ Since $L$ is Ramsey, there exist an injective sequence $ \mathbf{x}=\langle x_n\rangle_{n=1}^\infty$ in $L \backslash \{e\}$ and $i \in \Lambda$ such that for every non-empty finite subset $F$ of $B$, $ \spt(\delta_F) \subset \widetilde{C_i};$ as $\widetilde{C_i} \subset C_i,$ we obtain that $\spt(\delta_F) \subset C_i.$ Therefore, $K$ is a Ramsey semiconvo.
\end{rem}

\begin{exmp} \label{hysup}  (i). Discussion in \ref{mp} can be summarized as: The Chebyshev polynomial hypergroup of second kind (CP2) is not a Ramsey hypergroup.
	
	(ii). Discussion in \ref{3.1.2} can be summarized as: The hypergroup deformations $(S,*)$ of $(S,\cdot):=(\mathbb{Z}_+,<,\text{max})$ as in Example \ref{KHD} above are all Ramsey hypergroups. In view of Remark \ref{4.3iii}, if a semiconvo or hypergroup $K$ contains a copy of any of these $(S,*)$ then $K$ is a Ramsey semiconvo or hypergroup respectively.
\end{exmp}

\begin{thm} Let $(S,\cdot)$ be an infinite commutative discrete action-free semigroup with the identity $e$ satisfying the conditions (i)-(vi) of Theorem \ref{main}. Then the semiconvo $(S,*)$ is a Ramsey semiconvo.
\end{thm}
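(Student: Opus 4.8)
The plan is to reduce the semiconvo statement to the semigroup Ramsey facts of Subsection \ref{rs}, after isolating the single feature of $*$ that could cause trouble. The key structural remark is that $*$ agrees with the semigroup product $\cdot$ off the idempotent diagonal: from the definition in Example \ref{KHD}(ii) one has $\delta_m*\delta_n=\delta_{mn}$ whenever $(m,n)\notin\mathcal{D}_{E_0(S)}$, while the genuine deformation $\delta_n*\delta_n=q_n$ occurs only for a repeated idempotent $n\in E_0(S)$. Hence, for an injective sequence $\mathbf{x}=\langle x_n\rangle_{n=1}^\infty$ drawn entirely from one algebraically closed part of $S$, each iterated convolution $\delta_F$ will be a single point mass equal to the ordinary product of the chosen terms, and the requirement $\spt(\delta_F)\subset C_i$ collapses to the ordinary requirement that this product lie in $C_i$. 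I would therefore split on whether $\widetilde{S}=S\backslash E(S)$ is empty.

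First I would treat $\widetilde{S}\neq\emptyset$. By condition (ii), $\widetilde{S}$ is an ideal, hence a subsemigroup consisting entirely of non-idempotents; by the Dichotomy (Proposition \ref{Dichotomy}) each of its elements has infinite order, so $\widetilde{S}$ contains a copy of $(\mathbb{N},+)$ and is, by Example \ref{copyN}(iii)(a), an infinite Ramsey semigroup. Since $\widetilde{S}$ is an ideal and $\widetilde{S}\cap E_0(S)=\emptyset$, every partial product of elements of $\widetilde{S}$ stays in $\widetilde{S}$ and avoids the diagonal deformation, so on $\widetilde{S}$ the operation $*$ is exactly $\cdot$ and each $\delta_F$ is the point mass at $\prod_j x_{n_j}$. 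Given a partition $S=\bigcup_{i=1}^r C_i$, I restrict it to $\widetilde{S}$ and apply the Ramsey property of $(\widetilde{S},\cdot)$ to obtain a colour $i$ and an injective sequence in $\widetilde{S}$ all of whose finite products lie in $C_i$; this sequence lies in $S\backslash\{e\}$ because $e\notin\widetilde{S}$, and $\spt(\delta_F)=\{\prod_j x_{n_j}\}\subset C_i$ for every non-empty finite $F$. Equivalently, one may note that $\widetilde{S}\cup\{e\}$ is an infinite discrete subsemiconvo on which $*$ reduces to the Ramsey semigroup $\widetilde{S}$, and quote Remark \ref{4.3iii}.

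It remains to consider $S=E(S)$, that is $\widetilde{S}=\emptyset$. Here condition (i) together with infiniteness forces $S=E(S)\cong(\mathbb{Z}_+,<,\text{max})$, and conditions (i), (v), (vi) of Theorem \ref{main} are exactly the hypotheses of Theorem \ref{Ordered}; thus $(S,*)$ is a hypergroup deformation of the max semigroup in the sense of Example \ref{KHD}(i). For such deformations the Ramsey property was already recorded in Example \ref{hysup}(ii): given a partition of $\mathbb{Z}_+$ at least one cell is infinite, the max semigroup is Ramsey by Example \ref{copyN}(i), and for an injective sequence the convolution $\delta_F$ equals $\delta_{\max_j x_{n_j}}$, a point mass whose support lies in that cell. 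Therefore $(S,*)$ is a Ramsey hypergroup, in particular a Ramsey semiconvo.

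I expect the only real obstacle to be the verification that every $\delta_F$ is a single point mass, i.e. that an injective sequence never activates $q_n$. Confining the sequence to a single part removes the difficulty: inside the ideal $\widetilde{S}$ all partial products avoid $E_0(S)$, whereas inside $E(S)$ the running product is always the maximum of the previously chosen (hence distinct) terms. Once this point-mass reduction is secured, the theorem follows by transporting Hindman's theorem (Theorem \ref{fsum}) and the max-semigroup Ramsey property through the appropriate subsemigroup, with only routine bookkeeping about restricting the colouring and discarding the identity.
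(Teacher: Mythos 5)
Your proposal is correct and takes essentially the same route as the paper's own proof: both rest on the facts that hypergroup deformations of $(\mathbb{Z}_+,<,\text{max})$ are Ramsey (Example \ref{hysup}(ii)), that the set $\widetilde{S}$ of non-idempotents is, when non-empty, an ideal on which $*$ coincides with $\cdot$ and which contains a copy of $(\mathbb{N},+)$, hence is Ramsey (Example \ref{copyN}(iii)), and that the Ramsey property passes from an infinite subsemiconvo to the whole semiconvo (Remark \ref{4.3iii}). The only difference is organizational: the paper splits on whether $E(S)$ is infinite or finite, while you split on whether $\widetilde{S}$ is empty or not; in the overlapping situation both sub-arguments apply, so the two proofs agree in substance.
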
 
\begin{proof}
	 To prove $(S,*)$ is a Ramsey semiconvo we consider two different cases.
	
	First, consider the case that $E(S)$ is infinite. Then $(E(S),*)$ is a copy of a hypergroup deformation of $(\mathbb{Z}_+,<,\text{max}).$ By Example \ref{hysup} (ii) it follows that $(S,*)$ is a Ramsey semiconvo. 
	
	Finally, we consider the case when $E(S)$ is finite. Then $\widetilde{S} \,(= S \backslash E(S))$ is infinite. Because $S$ satisfies  condition (ii) of Theorem \ref{main}, we have that  $(\widetilde{S},\cdot)$ is a subsemigroup of $(S,\cdot)$ and further, $(\widetilde{S},*)$ coincides with $(\widetilde{S}, \cdot).$ As a consequence, $(\widetilde{S} \cup \{e\},*)$ coincides with $(\widetilde{S} \cup\{e\}, \cdot).$ By Example \ref{copyN} (iii), $(\widetilde{S} \cup \{e\}, \cdot)$ is a Ramsey semigroup with required injective sequence in $\tilde{S}.$ Therefore, $(\widetilde{S}\cup\{e\}, *)$ is Ramsey semoconvo. Now, Remark \ref{4.3iii} implies that $(S,*)$ is a Ramsey semiconvo.
\end{proof}

  For the rest of this subsection we work in the context of Example \ref{Kenthm}.   The following lemma is useful for proving the next few results.
\begin{lem}  \label{3} Let $K$ be a commutative discrete hypergroup and let $H$ be a subgroup of $Z(K).$ Then, for any non-empty subset $E $ of the hypergroup $ K//H$ and $x_1,x_2, \ldots, x_m \in K,$ we have \begin{equation*}
	(\delta_{[x_1]}*\delta_{[x_2]}* \cdots*\delta_{[x_m]})(E)= (\delta_{x_1}*\delta_{x_2}*\cdots*\delta_{x_m})(\pi^{-1}(E)). 
	\end{equation*} 
\end{lem}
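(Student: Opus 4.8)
The plan is to deduce the measure identity from its integrated form and to establish the latter by induction on $m$, with the defining relation \eqref{kencon} serving as the two-factor case. First I would reduce to a statement about finite sets. Since $*$ maps into $M_{F,p}$, both $\delta_{x_1}*\cdots*\delta_{x_m}$ and $\delta_{[x_1]}*\cdots*\delta_{[x_m]}$ are finitely supported probability measures, so neither side of the claimed identity changes if $E$ is replaced by its intersection with the finite support of $\delta_{[x_1]}*\cdots*\delta_{[x_m]}$. Hence I may assume $E$ is finite, so that $\mathbf{1}_E$ vanishes at infinity on $K//H$ and $\mathbf{1}_E\circ\pi=\mathbf{1}_{\pi^{-1}(E)}$. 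Writing each measure of a set as the integral of the corresponding indicator, the lemma becomes the assertion
\begin{equation*}
\int_{K//H} f\,d(\delta_{[x_1]}*\cdots*\delta_{[x_m]})=\int_{K} (f\circ\pi)\,d(\delta_{x_1}*\cdots*\delta_{x_m})
\end{equation*}
for $f=\mathbf{1}_E$; I would in fact prove it for every bounded $f$ on $K//H$, which is harmless precisely because all the measures involved have finite support and the integrals are finite sums.

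The induction runs on $m$. The case $m=1$ is immediate from $\pi(x_1)=[x_1]$, and the case $m=2$ is exactly \eqref{kencon}. For the inductive step I would peel off the last factor using associativity of $*$ together with the bilinear, iterated-integral description of convolution of measures. Fixing $f$, set
\begin{equation*}
g(a):=\int_{K//H} f\,d(\delta_a*\delta_{[x_{m+1}]})\qquad(a\in K//H),
\end{equation*}
so that, since $\delta_{[x_{m+1}]}$ is a point mass,
\begin{equation*}
\int_{K//H} f\,d\bigl((\delta_{[x_1]}*\cdots*\delta_{[x_m]})*\delta_{[x_{m+1}]}\bigr)=\int_{K//H} g\,d(\delta_{[x_1]}*\cdots*\delta_{[x_m]}).
\end{equation*}
Applying the inductive hypothesis to the bounded function $g$ converts the right-hand side into $\int_{K}(g\circ\pi)\,d(\delta_{x_1}*\cdots*\delta_{x_m})$.

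It remains to identify $g\circ\pi$. For $u\in K$ one has $g(\pi(u))=\int_{K//H} f\,d(\delta_{[u]}*\delta_{[x_{m+1}]})$, and here \eqref{kencon} (with $x=u$, $y=x_{m+1}$) gives $g(\pi(u))=\int_{K}(f\circ\pi)\,d(\delta_u*\delta_{x_{m+1}})$. Thus $g\circ\pi$ is exactly the partial-convolution function on $K$ attached to $f\circ\pi$ and $x_{m+1}$, and a second use of the iterated-integral formula, now on $K$, yields $\int_{K}(g\circ\pi)\,d(\delta_{x_1}*\cdots*\delta_{x_m})=\int_{K}(f\circ\pi)\,d(\delta_{x_1}*\cdots*\delta_{x_{m+1}})$. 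Chaining the three displays closes the induction, and taking $f=\mathbf{1}_E$ recovers the stated identity. The one point requiring care, which I regard as the main obstacle, is the bookkeeping of the class of admissible test functions: the intermediate function $g$ need not vanish at infinity on $K//H$, so \eqref{kencon} cannot be quoted for $g$ itself. This is precisely why I prove the integrated identity for all bounded $f$ and exploit that every convolution occurring here is finitely supported, so that the integrals are finite sums and the inductive hypothesis applies legitimately to $g$.
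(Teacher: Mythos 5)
Your proof is correct and is essentially the paper's own argument: the paper derives the two-factor identity \eqref{fs} from \eqref{kencon}, then handles $m=3$ by exactly your peeling-off computation---its translate function $\chi_E^{[z]}$ is your $g$---and passes to general $m$ with a ``similarly,'' which your induction on $m$ merely formalizes. Your added care about the class of test functions (bounded $f$ rather than $f$ vanishing at infinity, legitimate because every measure occurring here is finitely supported) tidies up a point the paper glosses over; note only that your preliminary reduction to finite $E$ is unnecessary---the bounded-$f$ statement already yields arbitrary $E$ directly---which is just as well, since justifying that reduction on the right-hand side would need the support inclusion $\pi(\spt(\delta_{x_1}*\cdots*\delta_{x_m}))\subset\spt(\delta_{[x_1]}*\cdots*\delta_{[x_m]})$ that is itself a consequence of the lemma.
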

\begin{proof} For any non-empty set $E \subset K//H,$ by the definition of `$*$' (see  \eqref{kencon} in Example \ref{Kenthm}), we have, for $x,y \in K,$ 
	$$\int_{K//H} \chi_E \, d(\delta_{[x]}*\delta_{[y]})= \int_K \chi_{\pi^{-1}(E)} (u) \, d(\delta_x*\delta_y)(u)$$ which is equivalent to \begin{equation} \label{fs}
	(\delta_{[x]}*\delta_{[y]})(E)= (\delta_x*\delta_y)(\pi^{-1}(E),\,\, \text{i.e.}, \,\, \chi_E([x]*[y])= \chi_{\pi^{-1}(E)}(x*y).
	\end{equation}
	
	Now, for  $x,y,z \in K,$ we get \begin{eqnarray*}
		(\delta_{[x]}*\delta_{[y]}*\delta_{[z]})(E) &=& \int_{K//H} \chi_E\, d(\delta_{[x]}*\delta_{[y]}*\delta_{[z]})  \\ &=&  \int_{K//H} \int_{K//H} \chi_E([t]*[s])\, d(\delta_{[x]}*\delta_{[y]})([t])\, d(\delta_{[z]})([s]) \\ &=& \int_{K//H} \chi_E([t]*[z])\, d(\delta_{[x]}*\delta_{[y]})([t]) \\ &=& \int_{K//H} \chi_E^{[z]}([t])\, d(\delta_{[x]}*\delta_{[y]})([t]) \\ &=& \int_{K} \chi_E^{[z]}([u])\, d(\delta_x*\delta_y)(u)\,\,\,\,\, \text{[Using  \eqref{kencon}]} \\&=& \int_K \chi_E([u]*[z])\, d(\delta_x*\delta_y)(u)\\ &=& \int_K \chi_{\pi^{-1}(E)}(u*z)\, d(\delta_x*\delta_y)(u) \,\,\,\,\,\,\, [\text{From \eqref{fs}}] \\&=& \int_K \int_K \chi_{\pi^{-1}(E)}(u*v)\, d(\delta_x*\delta_y)\, d(\delta_z)(v) \\ &=& \int_K \chi_{\pi^{-1}(E)}\, d(\delta_x*\delta_y*\delta_z)= (\delta_x*\delta_y*\delta_z)(\pi^{-1}(E)).
	\end{eqnarray*}  
	
	Similarly, if $x_1, x_2, \ldots, x_m \in K,$ then for any $\emptyset \neq  E \subset K//H$ we have 
	\begin{equation*}
	(\delta_{[x_1]}*\delta_{[x_2]}* \cdots*\delta_{[x_m]})(E)= (\delta_{x_1}*\delta_{x_2}*\cdots*\delta_{x_m})(\pi^{-1}(E)). 
	\end{equation*} \end{proof}

\begin{thm}\label{douken} Let $K$ be a commutative discrete hypergroup and let $H$ be a finite subgroup of $Z(K).$ If $K$ is a Ramsey hypergroup then the hypergroup $K//H$ is a Ramsey hypergroup. 
\end{thm}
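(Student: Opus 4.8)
The plan is to transport a colouring of $K//H$ back to $K$ through the natural surjection $\pi$, invoke the Ramsey property of $K$, and then push the resulting sequence forward through $\pi$, using Lemma \ref{3} to carry the support condition across. Concretely, given a partition $K//H = \bigcup_{i=1}^r C_i$, I would form the induced partition $K = \bigcup_{i=1}^r \pi^{-1}(C_i)$; these sets are pairwise disjoint and cover $K$ because the $C_i$ do so in $K//H$ and $\pi$ is surjective. Since $K$ is assumed to be a Ramsey hypergroup, Definition \ref{def3.1} furnishes an index $i$ and an injective sequence $\langle x_n\rangle_{n=1}^\infty$ in $K\setminus\{e\}$, with range $B$, such that $\spt(\delta_F)\subset \pi^{-1}(C_i)$ for every non-empty finite subset $F\subset B$.

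Next I would verify that the images $[x_n]$ do the job in $K//H$ for the same index $i$. For any finite product $\mu := \delta_{x_{n_1}}*\cdots*\delta_{x_{n_m}}$ the inclusion $\spt(\mu)\subset\pi^{-1}(C_i)$ is equivalent to $\mu(\pi^{-1}(C_i))=1$, because $\mu$ is a finitely supported probability measure and hence assigns full mass to a set exactly when its support lies in that set. By Lemma \ref{3} we have $\mu(\pi^{-1}(C_i)) = (\delta_{[x_{n_1}]}*\cdots*\delta_{[x_{n_m}]})(C_i)$, so the corresponding product in $K//H$ likewise assigns full mass to $C_i$; equivalently $\spt(\delta_{[x_{n_1}]}*\cdots*\delta_{[x_{n_m}]})\subset C_i$. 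Thus the support condition demanded by Definition \ref{def3.1} already holds for the pushforward sequence, modulo the question of injectivity.

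The main obstacle, and the only place where finiteness of $H$ is used, is that $\langle [x_n]\rangle$ need not be injective, nor need it avoid the identity $[e]=H$ of $K//H$, whereas Definition \ref{def3.1} requires an injective sequence in $(K//H)\setminus\{[e]\}$. Here I would exploit that each fibre $\pi^{-1}([x]) = xH$ has at most $c:=\#H$ elements, so $\pi$ is at most $c$-to-one; consequently $\pi(B)$ is infinite and the sequence $\langle [x_n]\rangle$ attains each value at most $c$ times, so a routine greedy extraction yields a subsequence $\langle [x_{n_j}]\rangle_{j=1}^\infty$ with pairwise distinct terms. Moreover $\pi^{-1}([e])=H$ is finite, so at most $c-1$ of the $x_n$ satisfy $[x_n]=[e]$, and discarding these finitely many terms moves the sequence into $(K//H)\setminus\{[e]\}$ while preserving injectivity. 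Since every finite product formed from a subsequence is among the finite products of the original sequence, the support condition established above is inherited by the extracted subsequence. This produces the injective sequence required by Definition \ref{def3.1}, showing that $(K//H,*)$ is a Ramsey semiconvo and hence, being a hypergroup by Example \ref{Kenthm}, a Ramsey hypergroup.
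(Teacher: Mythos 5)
Your proposal is correct and follows essentially the same route as the paper's proof: pull the colouring back through $\pi$, apply the Ramsey property of $K$, push the sequence forward, extract an injective subsequence using the finiteness of $H$, and transfer the full-mass (support) condition via Lemma \ref{3}. The one small point where you are more careful than the paper is in explicitly discarding the finitely many terms with $[x_n]=[e]$ so that the extracted sequence lies in $(K//H)\setminus\{[e]\}$, a detail the paper's proof leaves implicit.
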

\begin{proof}
	Take a partition $\{C_i\}_{i=1}^r$ of $K//H$ so that $K//H = \cup_{i=1}^r C_i.$  
	Set $\tilde{C_i}= \pi^{-1}(C_i),\, 1 \leq i \leq r.$ Then $\{\tilde{C_i}\}_{i=1}^r$ is a partition of $K.$ Since $K$ is a Ramsey hypergroup, there exist $i \in \{1,2, \ldots, r\}$ and an injective sequence $\langle x_n \rangle_{n=1}^\infty \subset K\backslash \{e\}$ such that $\delta_{F'}(\tilde{C_i})=1$ for any non-empty finite subset $F'$ of the range of  the sequence $\langle x_n \rangle_{n=1}^\infty.$ For $n \in \mathbb{N},$ set $y_n=\pi(x_n).$  Since $H$ is finite and the $x_n$'s are distinct, we get an injective sequence $\langle y_{n_j}\rangle_{j=1}^\infty,$ subsequence of $\langle y_n \rangle_{n=1}^\infty$ with $n_j'$s strictly increasing. Put $z_j:=y_{n_j}$ for $j \in \mathbb{N}.$ Consider any non-empty finite subset $F$ of the range of $\langle z_j \rangle_{j=1}^\infty$ say, $F:=\{z_{j_k}: 1 \leq k \leq m\}$ with $1 \leq {j_1}<j_2< \cdots < {j_m}.$ Now, set $F':=\{x_{n_{j_k}}: 1 \leq k \leq m\}.$  
	
	Using Lemma \ref{3} , we have 
	\begin{equation} \label{next}
		\delta_F(C_i )= \delta_{F'}(\tilde{C_i}).
	\end{equation} 
	To see this,
	\begin{eqnarray*}
		\delta_F(C_i ) =(\delta_{z_{j_1}}*\delta_{z_{j_2}}*\cdots*\delta_{z_{j_m}})(C_i) &=& (\delta_{y_{n_{j_1}}}*\delta_{y_{n_{j_2}}}* \cdots* \delta_{y_{n_{j_m}}})(C_i)\\ &=& (\delta_{\pi(x_{n_{j_1}})}*\delta_{\pi(x_{n_{j_2}})}*\cdots*\delta_{\pi(x_{n_{j_m}})}) (C_i) \\ &=& \left( \delta_{ \left[ x_{n_{j_1 }}\right]}*\delta_{ \left[ x_{n_{j_2 }}\right]}*\cdots*\delta_{ \left[ x_{n_{j_m }}\right]} \right) (C_i) \\ &=& (\delta_{x_{n_{j_1}}}*\delta_{x_{n_{j_2}}}*\cdots*\delta_{x_{n_{j_m}}}) (\tilde{C_i}) \\ &=& \delta_{F'}(\tilde{C_i}). 
	\end{eqnarray*}

But, $ \delta_{F'}(\tilde{C_i})=1$ and, therefore, $	\delta_F(C_i )=1,$ i.e.,  $\spt(\delta_F) \subset C_i.$ Hence,  $K//H$ is a Ramsey hypergroup. 
\end{proof}

\begin{exmp}
We may take $K=(S,*)$ for any hypergroup deformation of $(\mathbb{Z}_+,<, \text{max})$ with $q_1(1)=0.$ Then $Z(K)=\{0,1\}.$ We take $H=Z(K).$ We note that $$K//H= \{\{0,1\} , \{m\} : m \geq 2\}$$ and  
\begin{eqnarray*}
\delta_{\{m\}}* \delta_{\{n\}} = \begin{cases}
 \delta_{\{\text{max}\{m,n\}\}} & \text{for}\, m \neq n\,\, \text{with}\, m,n \geq 2, \\
 \left(q_m(0)+q_m(1) \right) \delta_{\{0,1\}}+ \sum_{k \geq 2} q_m(k) \delta_{\{k\}} &  \text{for}\,\, m=n \geq 2. 
\end{cases}
\end{eqnarray*}
Then, by Theorem \ref{douken} $K//H$ is a Ramsey hypergroup.
\end{exmp}
\begin{defn} \begin{itemize}
		\item[(i)] Let $(K, *)$ be an infinite discrete semiconvo. $(K,*)$ will be called an {\it almost-Ramsey semiconvo} if for every partition $K= \bigcup_{i=1}^r C_i,$ there exist $i,$ an injective sequence $\mathbf{x}= \langle x_n \rangle_{n=1}^\infty$ in $K \backslash \{e\}$ and a finite subset $\mathcal{F}$ of $\text{SFC}(\langle x_n\rangle_{n=1}^\infty)$ such that $\text{SFC}(\langle x_n\rangle_{n=1}^\infty) \backslash \mathcal{F} \subset \mathcal{P}(C_i).$ 
		\item[(ii)] Let $(K, *)$ be an infinite discrete semiconvo. $(K,*)$ will be called an {\it almost-strong Ramsey semiconvo} if for every partition $K= \bigcup_{i=1}^r C_i,$ there exist an $i \in \{1,2,\ldots, r\},$ an infinite subsemiconvo of $L$ of $K$ and a finite subset $D$ of $L$ such that  $L \backslash D \subset C_i.$ 
		\item[(iii)] If $(K,*,\vee)$ is an infinite discrete hypergroup such that $(K,*)$ is an almost-Ramsey semiconvo then $(K,*,\vee)$ will be called an {\it almost-Ramsey hypergroup}.  An {\it almost-strong Ramsey hypergroup} can be defined by replacing semiconvo and subsemicovo by hypergroup and subhypergroup respectively in (ii) above.  
	\end{itemize}
	
\end{defn}
\begin{rem}
If an infinite discrete subsemiconvo $L$ of a semiconvo $K$ is an almost-Ramsey semiconvo or an almost-strong Ramsey semiconvo,  then $K$ is also an almost-Ramsey semiconvo or an almost-strong Ramsey semiconvo respectively. We have only to modify the proof of Remark \ref{4.3iii}  above to prove these. Clearly, the statement remains true if semiconvo and subsemiconvo are replaced by hypergroup and subhypergroup respectively.
\end{rem} 

\begin{exmp} \begin{itemize}
		\item[(i)]  The Chebyshev polynomial hypergroup of second kind (CP2) as in Example \ref{2.4} is not an almost-Ramsey hypergroup. To see this, we have only to refine the argument in Item \ref{mp}.  To elaborate, take the finite partition  $\{C_{i}\}_{i=1}^3$ of $\mathbb{Z}_+,$ where  $C_{i}:=\{n \in \mathbb{Z}_+: n \equiv i-1\,\text{mod} \,3\}.$ Let, if possible, there exist a (strictly increasing) injective sequence $\langle x_n\rangle_{n=1}^\infty$ in $\mathbb{N},$ $i \in \{1,2,3\}$ and a finite subset $\mathcal{F}$ of $\text{SFC}(\langle x_n\rangle_{n=1}^\infty)$ such that $\text{SFC}(\langle x_n\rangle_{n=1}^\infty) \backslash \mathcal{F} \subset \mathcal{P}(C_i).$ Set $D=\cup \{V:V \in \mathcal{F}\}.$ Then $D$ is a finite subset of $\mathbb{Z}_+$. Now, put $m= \text{max} (D)+1.$  Let $F=\{x_m,x_{2m}\}.$ Then $m \leq x_m <x_m+m \leq x_{2m}$ and $\spt(\delta_F)= \{x_{2m}-x_m+2j: 0 \leq j \leq x_m\}.$ Note that $\spt(\delta_F) \cap D = \emptyset.$  So, $\spt(\delta_F)  \in \text{SFC}(\langle x_n\rangle_{n=1}^\infty) \backslash \mathcal{F} \subset \mathcal{P}(C_i).$ Therefore, $\spt(\delta_F) \subset C_i$ which is not true as  the support $\spt(\delta_F)$ contains two or more consecutive elements of difference $2$ while every $C_i$ contains elements with the difference of multiples of $3.$ 
		\item[(ii)]  The hypergroup deformations $(S,*)$ of $(S,\cdot):=(\mathbb{Z}_+,<,\text{max})$ as in Example \ref{KHD} above are not almost-strong Ramsey hypergroups as $(S,*)$ does not have any proper infinite  subhypergroup.   
	\end{itemize}
\end{exmp}

\begin{thm} 
	Let $K$ be a commutative discrete hypergroup and let $H$ be a finite subgroup of $Z(K).$ If $K$ is an almost-Ramsey hypergroup then the hypergroup $K//H$ is an almost-Ramsey hypergroup. 
\end{thm}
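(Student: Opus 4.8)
The plan is to follow the proof of Theorem \ref{douken} verbatim, upgrading it so as to keep track of a finite exceptional set. First I would take a partition $K//H = \bigcup_{i=1}^r C_i$ and pull it back to the partition $\{\tilde{C_i}\}_{i=1}^r$ of $K$ with $\tilde{C_i} = \pi^{-1}(C_i).$ Since $K$ is an almost-Ramsey hypergroup, applying the definition to this partition yields an index $i,$ an injective sequence $\mathbf{x} = \langle x_n\rangle_{n=1}^\infty$ in $K \backslash \{e\},$ and a \emph{finite} set $\mathcal{F} \subset \text{SFC}(\mathbf{x})$ with $\text{SFC}(\mathbf{x}) \backslash \mathcal{F} \subset \mathcal{P}(\tilde{C_i}).$

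Next I would push the sequence down to $K//H$ exactly as in Theorem \ref{douken}: set $y_n = \pi(x_n),$ delete the finitely many $n$ for which $x_n \in H$ (so that $y_n \neq [e]$), and use that the fibres $\pi^{-1}(\{[w]\}) = wH$ have at most $\#H$ elements to extract an injective subsequence $\langle z_j\rangle_{j=1}^\infty = \langle y_{n_j}\rangle_{j=1}^\infty$ in $(K//H) \backslash \{[e]\}.$ For a non-empty finite subset $F = \{z_{j_1}, \ldots, z_{j_m}\}$ of the range of $\langle z_j\rangle,$ I associate the finite set $F' = \{x_{n_{j_1}}, \ldots, x_{n_{j_m}}\}$ in $K.$ By Lemma \ref{3} one has $\delta_F(E) = \delta_{F'}(\pi^{-1}(E))$ for every non-empty $E \subset K//H;$ specialising to singletons $E = \{[w]\}$ gives $\delta_F(\{[w]\}) = \delta_{F'}(wH),$ so that $[w] \in \spt(\delta_F)$ iff $wH \cap \spt(\delta_{F'}) \neq \emptyset,$ i.e. the support identity $\spt(\delta_F) = \pi(\spt(\delta_{F'})).$

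The key new step is the choice of the finite exceptional set for $K//H.$ I would put $\mathcal{G} := \text{SFC}(\langle z_j\rangle) \cap \{\pi(S) : S \in \mathcal{F}\};$ since $\mathcal{F}$ is finite, so is $\{\pi(S) : S \in \mathcal{F}\}$ and hence $\mathcal{G}.$ Now suppose $\spt(\delta_F) \in \text{SFC}(\langle z_j\rangle) \backslash \mathcal{G}.$ If $\spt(\delta_{F'})$ were in $\mathcal{F},$ then $\spt(\delta_F) = \pi(\spt(\delta_{F'}))$ would lie in $\{\pi(S): S \in \mathcal{F}\},$ hence in $\mathcal{G},$ a contradiction; so $\spt(\delta_{F'}) \in \text{SFC}(\mathbf{x}) \backslash \mathcal{F} \subset \mathcal{P}(\tilde{C_i}),$ that is $\spt(\delta_{F'}) \subset \tilde{C_i} = \pi^{-1}(C_i).$ Applying $\pi$ and the support identity gives $\spt(\delta_F) = \pi(\spt(\delta_{F'})) \subset C_i.$ Thus $\text{SFC}(\langle z_j\rangle) \backslash \mathcal{G} \subset \mathcal{P}(C_i),$ which is precisely the almost-Ramsey condition for $K//H.$

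The only point requiring genuine care, and the main obstacle relative to the Ramsey case of Theorem \ref{douken}, is verifying that $\mathcal{G}$ is finite. Everything hinges on the observation that the ``bad'' supports in $K$ already form the finite set $\mathcal{F},$ and that passing to $K//H$ is simply the application of $\pi$ to each support, so a finite collection has finite image. I would double-check the support identity $\spt(\delta_F) = \pi(\spt(\delta_{F'}))$ with some care, since it is the mechanism that converts ``$\spt(\delta_{F'}) \subset \tilde{C_i}$'' into ``$\spt(\delta_F) \subset C_i$''; here the fibre $\pi^{-1}(\{[w]\}) = wH$ being exactly a (central) double coset, together with the measure identity of Lemma \ref{3}, is what makes the argument go through.
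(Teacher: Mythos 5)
Your proposal is correct and is essentially the paper's own argument: the paper disposes of this theorem in one line, saying it follows by using Equation \eqref{next}, i.e.\ the transfer identity $\delta_F(C_i)=\delta_{F'}(\tilde{C_i})$ obtained from Lemma \ref{3} in the proof of Theorem \ref{douken}, which is exactly the mechanism you use. Your explicit handling of the exceptional set---taking $\mathcal{G}$ to be the finite set $\text{SFC}(\langle z_j\rangle_{j=1}^\infty)\cap\{\pi(S):S\in\mathcal{F}\}$ and invoking the support identity $\spt(\delta_F)=\pi(\spt(\delta_{F'}))$---just fills in the details the paper leaves implicit.
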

\begin{proof}
	The proof of this theorem follows by using Equation \eqref{next}. 
\end{proof} 
\begin{thm}
	No polynomial hypergroup $K_{\bf P}$ is an almost-strong Ramsey hypergroup.
\end{thm}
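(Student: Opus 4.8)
The plan is to exhibit a single finite colouring of $\mathbb{Z}_+$ that defeats the almost-strong Ramsey property for every polynomial hypergroup at once, and the natural candidate is the folklore $2$-colouring of Example \ref{gt}(iii), extended to $\mathbb{Z}_+$ by colouring the identity $e=0$ arbitrarily. Recall that this colouring partitions $\mathbb{N}$ into consecutive blocks $B_1, B_2, B_3,\ldots$ of lengths $1,2,3,\ldots$, with $B_\ell$ red when $\ell$ is odd and blue when $\ell$ is even. I denote the two colour classes by $C_1$ (red) and $C_2$ (blue).

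The reduction to semigroups is supplied by Corollary \ref{subsem}: any infinite subhypergroup $L$ of $K_{\bf P}$ is an infinite subsemigroup of $(\mathbb{Z}_+,+)$. Indeed, by Theorem \ref{gpositive}(ii) we have $g(n,m;n+m)>0$, so $n+m \in \spt(\delta_n*\delta_m) \subseteq L$ whenever $n,m \in L$. Thus the subobjects I must rule out as almost-monochromatic are precisely the infinite subsemigroups of $(\mathbb{Z}_+,+)$, and the theorem becomes the hypergroup counterpart of the fact that $(\mathbb{N},+)$ is not almost-strong Ramsey.

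The key structural step is the classical numerical-semigroup fact that an infinite subsemigroup $L$ of $(\mathbb{Z}_+,+)$ contains a full arithmetic tail: writing $d=\gcd(L\cap\mathbb{N})$, there is an $M$ with $\{dm : m\geq M\}\subseteq L$. I would justify this by choosing finitely many elements of $L$ realizing the gcd $d$ and invoking the Frobenius fact that every sufficiently large multiple of $d$ is a non-negative integer combination of them, hence lies in $L$. I would then verify that the colouring defeats every such tail: among any $d$ consecutive integers there is a multiple of $d$, so once $\ell\geq d$ each block $B_\ell$ contains a term of the progression $\{dm\}$; since the blocks with $\ell\geq d$ alternate in colour and there are infinitely many of each parity, the progression $\{dm : m\geq M\}$, and therefore $L$, meets both colour classes in infinitely many points.

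Combining these: if $K_{\bf P}$ were almost-strong Ramsey, there would exist a colour $i$, an infinite subhypergroup $L$, and a finite subset $D\subseteq L$ with $L\setminus D\subseteq C_i$; but $L$ meets the complementary class $C_{3-i}$ infinitely often, contradicting the finiteness of $D$. I expect the only (modest) obstacle to be stating the arithmetic-tail lemma at the right level of generality and confirming that the block-length colouring meets every progression $\{dm\}$ in both colours irrespective of the common difference $d$; once that is in place, the reduction via Corollary \ref{subsem} is routine bookkeeping.
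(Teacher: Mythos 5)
Your proof is correct and follows essentially the same route as the paper's: the same block-length colouring of $\mathbb{Z}_+$ (the paper colours $0$ blue, you colour it arbitrarily --- immaterial either way), and the same reduction via Corollary \ref{subsem} from infinite subhypergroups of $K_{\bf P}$ to infinite subsemigroups of $(\mathbb{Z}_+,+)$. The one divergence is at the final combinatorial step: the paper simply cites the folklore fact recorded in Example \ref{gt}(iii), i.e.\ \cite[Lemma 3.8]{Gili}, that this colouring admits no almost-monochromatic infinite subsemigroup of $(\mathbb{N},+)$, whereas you prove that fact from scratch via the numerical-semigroup tail lemma (gcd plus Frobenius). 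This makes your argument self-contained, which is a genuine gain; but note that your structural lemma is heavier than needed. It suffices to pick a single element $a \in L \cap \mathbb{N}$, observe that every multiple $na$ lies in $L$ by closure under addition, and run your block argument on the progression $\{na : n \geq 1\}$: every block $B_\ell$ with $\ell \geq a$ contains a multiple of $a$, these blocks come in both colours infinitely often, and hence $L$ meets both colour classes infinitely often --- no gcd or arithmetic tail required. One last small point that both arguments must (and do) handle: the subhypergroup $L$ contains the identity $0$, so one passes to $L \cap \mathbb{N}$, which is still an infinite additive subsemigroup, before invoking the statement about $(\mathbb{N},+)$.
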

\begin{proof}
Let, if possible, $K_{\bf P}$ be an almost-strong Ramsey hypergroup. The 2-colouring of $\mathbb{N}$ used in Example \ref{gt} (iii) provides a clue.  Take the 2-colouring of $\mathbb{Z}_+$ given by 
	$${\color{blue}0}~{\color{red}1} ~{\color{blue}2~ 3}~ {\color{red} 4~ 5~ 6 }~ {\color{blue} 7 ~8~ 9~ 10  }~{\color{red} 11~ 12~ 13~ 14~ 15}~ {\color{blue} 16 ~17~ 18~ 19~ 20~ 21}~ {\color{red} 22 \ldots}$$ where the length of the intervals of elements of identical colours are $1,1, 2, 3,4, \ldots$. Then there exist an $i \in \{ \color{blue}\text{blue}, {\color{red}\text{red}} \},$ an infinite subsemiconvo $L$ of $K$ and a finite subset $D$ of $L$ such that  $L \backslash D \subset C_i.$ By Corollary \ref{subsem}, the underlying set $L$ is also an infinite subsemigroup of $(\mathbb{Z}_+,+).$ But, in view of Example \ref{gt}(iii), this is not possible. Hence, $K_{\bf P}$ is not an almost-strong Ramsey hypergroup.
\end{proof}

\section{Variants of Ramsey principle for hypergroups}  We will freely use notation and terminology from previous Section \ref{r3}. 

 \begin{defn} Let $0\leq \alpha<1.$ 
 	\begin{itemize}
 		\item[(i)] Let $(K, *)$ be an infinite discrete semiconvo .  $(K,*)$ will be called an  {\it $\alpha$-Ramsey semiconvo} if for every partition $K= \bigcup_{i=1}^r C_i,$ there exist $i$ and an injective sequence $\mathbf{x}= \langle x_n \rangle_{n=1}^\infty$ in $K \backslash \{e\}$ such that $\delta_F (C_i) >\alpha$  for every non-empty finite subset $F$ of the range of $\langle x_n \rangle_{n=1}^\infty.$ 
 		\item[(ii)]  If $(K,*,\vee)$ is an infinite discrete hypergroup such that $(K,*)$ is an $\alpha$-Ramsey semiconvo, then $(K,*,\vee)$ will be called an {\it $\alpha$-Ramsey hypergroup}. 
 	\end{itemize}
	
\end{defn} 

\begin{rem} (i) Clearly, a Ramsey semiconvo is $\alpha$-Ramsey for $0\leq \alpha<1.$
	
	(ii) Let $0 \leq \alpha< \beta <1.$ Then a $\beta$-Ramsey semiconvo is $\alpha$-Ramsey. 
	
	(iii) A $0$-Ramsey semiconvo $K$  will be called a {\it recurrent semiconvo}, because it is so, if for every partition $K= \bigcup_{i=1}^r C_i,$ there exist $i$ and an injective sequence $\mathbf{x}= \langle x_n \rangle_{n=1}^\infty$ in $K \backslash \{e\}$ such that $\delta_F (C_i) >0,$ i.e., $\spt(\delta_F) \cap C_i \neq \emptyset$ for every non-empty finite subset $F$ of the range of $\langle x_n \rangle_{n=1}^\infty.$ 

 \end{rem}

\begin{rem} \label{rem4} (i) Let $0 \leq \alpha<1.$ If an infinite discrete sub-semiconvo $L$ of a discrete semiconvo $K$ is $\alpha$-Ramsey then $K$ is $\alpha$-Ramsey. We have only to modify the proof of Remark \ref{4.3iii} after Definition \ref{def3.1} to prove this. Clearly, the statement remains true if the word semiconvo is replaced by hypergroup. 
	
	(ii) Note that if semiconvo $(K,*)$ is a semigroup with identity then Ramsey semiconvo and $\alpha$-Ramsey semiconvo are the same object.
\end{rem}

\begin{thm} \label{ephr}
	Every polynomail hypergroup $K_{\mathbf{P}}$ is a $0$-Ramsey hypergroup, that is, a recurrent hypergroup. 
\end{thm}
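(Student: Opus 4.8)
The plan is to reduce the statement to Hindman's Finite Sums Theorem (Theorem~\ref{fsum}) by exploiting the following structural fact: in a polynomial hypergroup the largest point of the support of a convolution product of point masses is exactly the ordinary integer sum of the indices. Recall from Theorem~\ref{gpositive} that $\spt(\delta_n*\delta_m)\subseteq\{|n-m|,\dots,n+m\}$ and, crucially, that the top linearization coefficient is strictly positive, $g(n,m;n+m)>0$. Since each $\delta_n*\delta_m$ is a probability measure with nonnegative coefficients, no cancellation can occur when convolutions are iterated, so the top support point always survives with positive weight.

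Accordingly, the first step I would carry out is a short lemma, proved by induction on $m$ using associativity: for any $n_1,\dots,n_m\in\mathbb{Z}_+$ the integer $n_1+\cdots+n_m$ is the maximal element of $\spt(\delta_{n_1}*\cdots*\delta_{n_m})$ and in particular belongs to it. The base case $m=2$ is immediate from $g(n_1,n_2;n_1+n_2)>0$. For the inductive step, write $\delta_{n_1}*\cdots*\delta_{n_{m+1}}=\mu*\delta_{n_{m+1}}$ with $\mu=\delta_{n_1}*\cdots*\delta_{n_m}$; since $\mu$ has nonnegative weights and (by the inductive hypothesis) maximal support point $n_1+\cdots+n_m$, the product has maximal support point $(n_1+\cdots+n_m)+n_{m+1}$, again carried with positive weight. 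This is the only place where the hypergroup structure is used.

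To conclude, given a partition $\mathbb{Z}_+=\bigcup_{i=1}^r C_i$ I would pass to the induced finite colouring $\mathbb{N}=\bigcup_{i=1}^r (C_i\cap\mathbb{N})$ of $\mathbb{N}$. By Theorem~\ref{fsum}, together with the remark recorded after it that the witnessing sequence may be taken injective, there are an index $i$ and an injective sequence $\langle x_n\rangle_{n=1}^\infty$ in $\mathbb{N}=K_{\mathbf P}\setminus\{e\}$ with $\text{FS}(\langle x_n\rangle_{n=1}^\infty)\subset C_i$. Then for every non-empty finite subset $F=\{x_{n_1},\dots,x_{n_m}\}$ of the range, the sum $\sum_{x\in F}x$ lies in $\text{FS}(\langle x_n\rangle_{n=1}^\infty)\subset C_i$ and, by the lemma, also in $\spt(\delta_F)$. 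Hence $\spt(\delta_F)\cap C_i\neq\emptyset$, i.e.\ $\delta_F(C_i)>0$ for every such $F$, which is exactly the recurrence ($0$-Ramsey) property.

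There is no genuine obstacle here: the argument hinges entirely on the elementary support-maximum lemma, after which the combinatorial content is supplied by Hindman's theorem. What is worth emphasizing is \emph{why} only the $0$-Ramsey conclusion is obtainable in this way: we control only a single point of $\spt(\delta_F)$, namely the top one, and have no grip on the lower support points. This is precisely the phenomenon underlying the CP2 counterexample in Subsection~\ref{mp}, where those lower points spread across residues that cannot be absorbed into one colour class, so that full Ramsey (and even almost-Ramsey) fails while $0$-Ramsey survives.
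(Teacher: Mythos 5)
Your proposal is correct and follows essentially the same route as the paper's proof: obtain an injective sequence in $\mathbb{N}$ with monochromatic finite sums via Hindman's theorem, then use $g(n,m;n+m)>0$ from Theorem~\ref{gpositive}(ii) to conclude that the integer sum $s_F$ lies in $\spt(\delta_F)$, hence $\delta_F(C_i)>0$. The only differences are cosmetic: the paper cites Theorem~\ref{GGH} rather than Theorem~\ref{fsum} with the injectivity remark, and it leaves implicit the induction showing the top support point survives iterated convolutions, which you spell out.
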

\begin{proof} Take a  partition $\{C_i\}_{i=1}^r$ of $\mathbb{Z}_+.$ Then, by Theorem \ref{GGH},  there exist a set $C_i$ and an injective sequence $\langle x_n\rangle_{n=1}^\infty$ in $\mathbb{N}$ such that all the finite sums of $\langle x_n\rangle_{n=1}^\infty$ are in $C_i.$ Since, by Theorem \ref{gpositive} (ii),  $g(n,m,n+m)>0$, it follows that for a finite subset $F$ of $\mathbb{N},$ $\spt(\delta_F)$ contains the element $s_F=\sum_{x_j \in F} x_j$ so that $\spt(\delta_F) \cap C_i \neq \phi,$ i.e.,  $\delta_F(C_i)>0.$ 
\end{proof}

In particular, Chebyshev polynomial hypergroup of second kind (CP2) is a recurrent hypergroup. We have already noted in Example \ref{hysup} (i) that CP2 is not a Ramsey hypergroup. The following theorem shows that CP2 is not even $\alpha$-Ramsey, $0<\alpha<1.$
\begin{thm} \label{CP2notalpha}
	Let $0< \alpha <1.$ Then the Chebyshev polynomial hypergroup of second kind (CP2) is  not an $\alpha$-Ramsey hypergroup. 
\end{thm}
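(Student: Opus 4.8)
The plan is to exhibit a single finite colouring of $\mathbb{Z}_+$ that defeats the $\alpha$-Ramsey condition for CP2, refining the residue-class idea already used to show that CP2 is not a Ramsey hypergroup. For a fixed $\alpha$, I would colour $\mathbb{Z}_+$ by residues modulo a suitably large \emph{odd} integer $M$, i.e. $C_i=\{n\in\mathbb{Z}_+: n\equiv i-1 \pmod M\}$ for $1\le i\le M$, and then show that for every colour $i$ and every injective sequence $\langle x_n\rangle_{n=1}^\infty$ in $\mathbb{N}=\mathbb{Z}_+\setminus\{0\}$ there is a two-element set $F$ with $\delta_F(C_i)\le\alpha$. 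This negates the $\alpha$-Ramsey property, since that property would require some $i$ and some injective sequence with $\delta_F(C_i)>\alpha$ for \emph{all} finite $F$.

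The key estimate concerns the two-point convolution. For $m<n$ the measure $\delta_m*\delta_n$ is supported on the arithmetic progression $n-m,\,n-m+2,\dots,\,n+m$ of step $2$, with the increasing weights $w_k=\frac{(n-m)+2k+1}{(m+1)(n+1)}$, $0\le k\le m$. Two facts drive the argument. First, the largest weight is $w_m=\frac{n+m+1}{(m+1)(n+1)}<\frac{2}{m+1}$, using only $m<n$. Second, because $M$ is odd, multiplication by $2$ is invertible modulo $M$, so each residue class mod $M$ contains at most $\lceil (m+1)/M\rceil$ of the $m+1$ support points. Combining these (and using that CP2 is hermitian, so $\delta_F$ is independent of the order of the factors) gives, for any residue class $C_i$,
\[
\delta_F(C_i)\;\le\;\Big(\tfrac{m+1}{M}+1\Big)\cdot\tfrac{2}{m+1}\;=\;\tfrac{2}{M}+\tfrac{2}{m+1}.
\]

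To finish, fix $\alpha\in(0,1)$ and choose an odd $M$ with $2/M<\alpha$, so $\varepsilon:=\alpha-2/M>0$. Given any colour $i$ and any injective sequence in $\mathbb{N}$, its range is an infinite, hence unbounded, subset of $\mathbb{N}$; so I can pick a term $m$ with $m+1>2/\varepsilon$ and then a term $n>m$, and set $F=\{m,n\}$. The displayed bound then yields $\delta_F(C_i)<\alpha$, which gives the desired $F$ for that $i$ and that sequence, and hence shows CP2 is not $\alpha$-Ramsey.

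The main obstacle, compared with the earlier non-Ramsey arguments, is quantitative rather than qualitative: it is not enough that $\spt(\delta_F)$ meets several colour classes, one must control the \emph{mass} so that no single class carries more than $\alpha$ of it. This is exactly what the two displayed ingredients secure, and it is why the modulus must be both odd (an even modulus would trap the support in a single parity class, destroying equidistribution) and large relative to $1/\alpha$. I expect the only real care to be in the bookkeeping of the weight bound $w_m<2/(m+1)$ and the count $\lceil (m+1)/M\rceil$, both of which are elementary.
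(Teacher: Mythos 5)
Your proof is correct, and it reaches the conclusion by a genuinely cleaner route than the paper. The paper also colours by residues, but modulo $4^k$ (with $k$ chosen so that $2^{-(k-1)}<\alpha$), and its argument is structurally different: it first observes that singleton sets $F=\{x_n\}$ force the whole sequence to lie inside the single class $C_i$, then writes $m=l_04^k+(i-1)$, $n=l_14^k+(i-1)$ for two well-separated terms, explicitly counts the support points of $\delta_n*\delta_m$ that land back in $C_i$ (there are exactly $2l_0+1$ of them, after a parity analysis showing $i-1$ must be even), and derives a numerical contradiction from inequalities in $l_0,l_1$. Your argument dispenses with the monochromaticity observation entirely: by taking the modulus $M$ \emph{odd}, the step-$2$ progression $n-m,n-m+2,\dots,n+m$ equidistributes over all residue classes (at most $\lceil (m+1)/M\rceil$ points per class), so the bound $\delta_F(C_i)<\tfrac{2}{M}+\tfrac{2}{m+1}$ holds uniformly for \emph{every} class $C_i$ and \emph{every} pair $m<n$, with no case analysis and no bookkeeping in $l_0,l_1,u$. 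What the paper's even modulus costs it is exactly the parity complication you flagged (the support is trapped in one parity class and hits admissible residues twice as densely), which is why its count and inequalities are more delicate; what your choice buys is that a single colouring defeats all colours and all sequences at once, making the negation of the $\alpha$-Ramsey property immediate. Both proofs rest on the same underlying quantitative fact --- the mass of $\delta_m*\delta_n$ is spread over $\min(m,n)+1$ points each of weight less than $2/(\min(m,n)+1)$ --- but your packaging of it is shorter and, unlike the paper's, does not depend on which class the sequence sits in.
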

\begin{proof}
 Let, if possible, the hypergroup CP2 $=(\mathbb{Z}_+,*)$ be an $\alpha$-Ramsey hypergroup for some $0<\alpha<1.$ Then for every partition $\mathbb{Z}_+= \bigcup_{i=1}^r C_i,$ there exist $i$ and an injective sequence $\mathbf{x}= \langle x_n \rangle_{n=1}^\infty$ in $C_i \backslash \{e\}$ such that $\delta_F (C_i) >\alpha,$ for any non-empty finite subset $F$ of range $B$ of $\langle x_n \rangle_{n=1}^\infty.$
Consider the partition $\{C_i\}_{i=1}^{4^k}$ of $\mathbb{Z}_+$ with $C_i=\{n: n=(i-1)\,(\text{mod}\, 4^k) \},$ where $k (\geq 2) \in \mathbb{N}$ is such that  $\alpha> \frac{1}{2^{k-1}}.$  Since $\langle x_n \rangle_{n=1}^\infty$ is an injective sequence,  there exist $m,n \in B$ such that $16 \leq 4^k<m<2m<n-m.$ Now, we have 

\begin{equation} \label{4}
\delta_n*\delta_m= \sum_{j=0}^m \frac{n-m+2j+1}{(n+1)(m+1)} \delta_{n-m+2j}.$$ So, $$ \frac{1}{2^{k-1}} < \alpha< (\delta_n*\delta_m)(C_i)= \sum_{\underset{n-m+2j \in C_i}{0\leq j \leq m}} \frac{n-m+2j+1}{(n+1)(m+1)}.
\end{equation}
 
Now, since $m,n \in B \subset C_i,$ we have $m=l_0 4^k+(i-1)$ and $n=l_1 4^k+(i-1)$ for some $l_0,l_1 \in \mathbb{N}.$ Therefore, $n-m= (l_1-l_0)4^k>2m= 2(l_0 4^k+ (i-1)) \geq  2l_0 4^k,$ which, in turn, gives $l_1>3l_0 \geq 3.$ 
  
Let $ 0 \leq j \leq m= l_0 4^k+(i-1).$ Then  $n-m+2j \in C_i$ if and only if $(l_1-l_0) 4^k+2j= (i-1)\,( \text{mod} 4^k).$ In view of \eqref{4} such a $j$ does exist. So $i-1$ must be even, say, $2u.$ As $0 \leq i-1=2u \leq 4^k-1,$ we have, $0 \leq 2u \leq 4^k-2,$ i.e., $0 \leq u \leq \frac{1}{2}4^k-1.$ Further,  
 
 \begin{equation}\label{new4}
 (\delta_n*\delta_m)(C_i)= \sum_{\underset{(l_1-l_0) 4^k+2j= 2u (\text{mod} 4^k)}{0 \leq j \leq l_0 4^k+2u}} \frac{n-m+2j+1}{(n+1)(m+1)}.
 \end{equation} 
  
 Now, for $0 \leq j \leq l_0 4^k+2u,$  either 
 \begin{equation} \label{aA}
 j= 4^k l+t,\,\,\,\,\,\,\, 0 \leq l <l_0, \, 0 \leq t \leq 4^k-1;
 \end{equation}
or 
\begin{equation} \label{Bb}
j=  4^k l_0 +s,\,\,\,\,\,0 \leq s \leq 2u.
\end{equation}
In case of \eqref{aA}, we have 
$n-m+2j= (l_1-l_0) 4^k+2 l 4^k+ 2t,$ which is $2u\,(\text{mod}\, 4^k)$ if and only if $2t=2u\, (\text{mod}\,4^k).$
This holds if and only if either $t=u$ or $t= \frac{1}{2} 4^k+u.$

In case of \eqref{Bb}, we have $n-m+2j= (l_1-l_0)4^k+2 l_0 4^k+2s,$ which is $2u\,(\text{mod}\, 4^k)$ if and only if either $s=u$ or $s= \frac{1}{2} 4^k+u.$ The latter is not possible simply because $\frac{1}{2}4^k+u$ is greater than $2u.$

So, the number of terms summed in \eqref{new4} is $2l_0+1$. Also, each such term  is $\leq \frac{m+n+1}{(m+1)(n+1)} = \frac{(l_04^k+(i-1))+(l_14^k+(i-1))+1}{(l_04^k+(i-1))(l_14^k+(i-1))} < \frac{(l_0+l_1+2)}{l_0l_1 4^k}.$ Thus 

$$ \delta_n*\delta_m(C_i)<\frac{(2l_0+1)(l_0+l_1+2)}{l_0l_1 4^k}.$$ Therefore, by \eqref{4}, we have $\frac{1}{2^{k-1}}<\frac{(2l_0+1)(l_0+l_1+2)}{l_0l_1 4^k},$ which gives $ 2^{k+1} l_0l_1<(2l_0+1)(l_0+l_1+2).$ But $k \geq 2.$ So, by using the inequalities $2+l_0 \leq 3l_0<l_1,$  we get $8l_0l_1<(2l_0+1)(l_0+l_1+2)<(2l_0+1) 2l_1=4l_0l_1+2l_1,$ which, in turn, gives, $4l_0l_1<2l_1,$ i.e., $2l_0<1,$ which is not possible.

Hence, CP2 is not an $\alpha$-Ramsey hypergroup. \end{proof} 

\begin{rem}  Using notation, terminologies and observations made in the proof of  Theorem \ref{CP2notalpha} and doing  direct computations, we can obtain the exact value of $\delta_n*\delta_m(C_i)$ as follows. 
	$$\delta_n*\delta_m(C_i)= \frac{2l_0+1}{m+1}= \frac{2l_0+1}{l_04^k+2u+1}=\frac{2(m-i+1)+4^k}{4^k (m+1)}.$$
	
\end{rem}

	\begin{thm} 
	Let $0 \leq \alpha <1$ and let $K$ be a commutative discrete hypergroup and let $H$ be a finite subgroup of $Z(K).$ If $K$ is an $\alpha$-Ramsey hypergroup then the hypergroup $K//H$ is an $\alpha$-Ramsey hypergroup. 
\end{thm}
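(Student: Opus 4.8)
The plan is to run the proof of Theorem \ref{douken} essentially verbatim, the only change being that the rigid Ramsey condition ``$\delta_{F'}(\tilde C_i)=1$'' is replaced throughout by the looser $\alpha$-Ramsey condition ``$\delta_{F'}(\tilde C_i)>\alpha$''. The engine driving the transfer across the quotient is the identity \eqref{next}, namely $\delta_F(C_i)=\delta_{F'}(\tilde C_i)$, which was established in the proof of Theorem \ref{douken} as a consequence of Lemma \ref{3}; being an exact equality, it carries the strict inequality $>\alpha$ over unchanged.

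First I would fix an arbitrary partition $K//H=\bigcup_{i=1}^r C_i$ and pull it back along the natural surjection $\pi$ to the partition $\tilde C_i:=\pi^{-1}(C_i)$, $1\le i\le r$, of $K$. Since $H$ is finite, every fiber of $\pi$ has at most $\#H$ points, so $K//H$ is infinite and the pullback is again a genuine finite partition of the infinite hypergroup $K$. Because $K$ is $\alpha$-Ramsey, there are an index $i$ and an injective sequence $\langle x_n\rangle_{n=1}^\infty$ in $K\setminus\{e\}$ with $\delta_{F'}(\tilde C_i)>\alpha$ for every non-empty finite subset $F'$ of its range.

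Next I would project the sequence down by setting $y_n=\pi(x_n)$. The finiteness of the fibers forces $\langle y_n\rangle$ to have infinite range, so it admits an injective subsequence $z_j:=y_{n_j}$ with the $n_j$ strictly increasing; as the identity $[e]=H$ can appear at most once among distinct terms, discarding that single term if necessary keeps the sequence inside $K//H\setminus\{[e]\}$. For a non-empty finite subset $F=\{z_{j_k}:1\le k\le m\}$ of the range of $\langle z_j\rangle$, I form the matching set $F'=\{x_{n_{j_k}}:1\le k\le m\}$ upstairs; equation \eqref{next} then yields $\delta_F(C_i)=\delta_{F'}(\tilde C_i)>\alpha$. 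As $F$ was arbitrary, the sequence $\langle z_j\rangle$ witnesses that $K//H$ is $\alpha$-Ramsey.

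The computation underlying \eqref{next} has already been carried out in full in the proof of Theorem \ref{douken}, so no new estimate is required here. The one step that genuinely uses a hypothesis, and which I would flag as the only possible snag, is the extraction of an injective downstairs sequence lying in $K//H\setminus\{[e]\}$: this is exactly where finiteness of $H$ is needed, guaranteeing finite fibers and hence an infinite-range projection. Every remaining step is formal, since \eqref{next} is an equality and therefore preserves the strict inequality defining the $\alpha$-Ramsey property.
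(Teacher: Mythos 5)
Your proposal is correct and is exactly the paper's argument: the paper's own proof is a one-line remark that the result follows by using equation \eqref{next} within the proof of Theorem \ref{douken}, which is precisely the substitution of ``$>\alpha$'' for ``$=1$'' that you carry out (your extra care with discarding a possible occurrence of $[e]$ is a harmless refinement).
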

\begin{proof}
	The proof of this theorem follows by using \eqref{next} in the proof of Theorem \ref{douken}. 
\end{proof}

Next, we come to semiconvos and hypergroups in Example \ref{ocd}.
\begin{thm} \label{3.6}
	 Let  $G$ be an infinite discrete group and let $H$ be a finite group with $\#H=c.$ Suppose that $(x,s) \mapsto x^s$ is an affine  action of $H$ on $G$. Then the discrete orbit semiconvo $G^H $ is a $0$-Ramsey semiconvo, that is, a recurrent semiconvo. In particular, we have the following facts. 
	\begin{itemize}
		\item[(i)] If $H$ is a finite subgroup of $G$ with $\#H=c$ then the discrete coset semiconvo $G/H$ is a recurrent semiconvo. 
		\item[(ii)] If $H$ is a finite subgroup of $G$ with $\#H=c$ then the discrete double coset hypergroup $G//H$ is a recurrent hypergroup. 
		\item[(iii)] If $H$ is a finite subgroup of the group of automorphisms of $G$ with $\#H=c$ then the discrete automorphism orbit hypergroup $G^H$ is a recurrent hypergroup.  
	\end{itemize}
\end{thm}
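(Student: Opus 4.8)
The plan is to transfer the Galvin--Glazer--Hindman theorem from the infinite group $G$ to its orbit space $G^H$ along the natural surjection $\pi\colon x\mapsto x^H$, exploiting the fact that recurrence only asks each support $\spt(\delta_F)$ to \emph{meet} one colour class rather than to be contained in it. The whole point is that, although the convolution in $G^H$ spreads a product over many orbits (which is why full Ramseyness can fail, cf.\ CP2), the single ``group-product'' orbit always survives in the support and can be kept monochromatic by choosing the base sequence in $G$.

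The first and central step is a support lemma: for any $x_1,\dots,x_m\in G$,
\[ (x_1x_2\cdots x_m)^H\in\spt\bigl(\delta_{x_1^H}*\delta_{x_2^H}*\cdots*\delta_{x_m^H}\bigr). \]
I would prove this by induction on $m$, using that in a discrete semiconvo the support of a convolution of positive measures is the union of the supports $\spt(\delta_a*\delta_b)$ over $a,b$ in the respective supports, together with the convolution formula $\delta_{x^H}*\delta_{y^H}=\tfrac{1}{c^2}\sum_{s,t\in H}\delta_{(x^sy^t)^H}$ from Example \ref{ocd}(i). Taking $s=t=e$ there (so $x^e=x$ and $y^e=y$, since the identity of $H$ acts trivially) keeps the term $(xy)^H$ in the support at each stage of the induction. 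Note this argument uses only that we have a genuine action; affineness is needed merely to know that $G^H$ is a semiconvo in the first place.

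Next I would set up the colouring transfer. Given a partition $G^H=\bigcup_{i=1}^r C_i$, pull it back to the partition $\tilde C_i=\pi^{-1}(C_i)$ of $G$. Since $G$ is an infinite group it is a Ramsey group (Example \ref{copyN}(ii)), so by Theorem \ref{GGH} there are an index $i$ and an injective sequence $\langle g_n\rangle$ in $G$ all of whose finite products lie in $\tilde C_i$; by Remark \ref{Smee}(i) I may take every $g_n$ to avoid the finite set $e^H$, so that $g_n^H\neq e^H$. Because each orbit $g^H$ has at most $c$ elements and the $g_n$ are distinct, the assignment $n\mapsto g_n^H$ has infinite range, so I can pass to a subsequence $\langle g_{n_j}\rangle$ for which $x_j:=g_{n_j}^H$ is an injective sequence in $G^H\setminus\{e^H\}$; its finite products with increasing indices are still finite products of $\langle g_n\rangle$ and hence remain in $\tilde C_i$.

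Finally I would conclude recurrence. For any non-empty finite $F=\{x_{j_1},\dots,x_{j_m}\}$ drawn from the range of $\langle x_j\rangle$, the support lemma places $(g_{n_{j_1}}\cdots g_{n_{j_m}})^H=\pi(g_{n_{j_1}}\cdots g_{n_{j_m}})$ in $\spt(\delta_F)$; since this group product lies in $\tilde C_i=\pi^{-1}(C_i)$, its image lies in $C_i$, so $\spt(\delta_F)\cap C_i\neq\emptyset$, i.e.\ $\delta_F(C_i)>0$. This is exactly the recurrent ($0$-Ramsey) property, and $G^H$ is infinite because $G$ is. The three itemised cases follow immediately, since the coset semiconvo $G/H$, the double coset hypergroup $G//H$, and the automorphism orbit hypergroup $G^H$ are all special instances of the affine orbit semiconvo (Example \ref{ocd}(ii)--(iv), the double coset case taking the acting group to be $H\times H$), and for the hypergroup cases recurrence of the underlying semiconvo is exactly recurrence of the hypergroup. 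The main obstacle is the support lemma, and specifically the verification that the group-product orbit genuinely persists through the iterated affine convolution; once the identity-action term is isolated, the remaining bookkeeping is routine.
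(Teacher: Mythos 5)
Your proposal is correct and follows essentially the same route as the paper's own proof: pull the colouring back to $G$ along $\pi$, apply the Galvin--Glazer--Hindman theorem there, pass to a subsequence so that the orbit sequence is injective, and observe that the ``identity-action'' term $(g_{n_{j_1}}\cdots g_{n_{j_m}})^H$ survives in the iterated convolution with mass at least $1/c^m$, hence meets the chosen colour class. The only cosmetic difference is that you package this last step as a standalone support lemma proved by induction (and are slightly more careful in discarding the whole finite identity orbit $e^H$ rather than just $e$), whereas the paper writes out the convolution $\delta_{F'}=\frac{1}{c^m}\sum_{h\in H^m}\delta_{\tau_h}$ explicitly and isolates the identity term of $H^m$ directly.
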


\begin{proof} Take any partition $\{C_i\}_{i=1}^r$ of $G^H.$ Set, for $1 \leq i \leq r,\, \widetilde{C_i}= \bigcup_{U \in C_i} U.$ Note that $G = \bigcup_{i=1}^r\widetilde{C_i}$ is a partition of $G.$ Also, for $1 \leq i \leq r$ and $x \in \widetilde{C_i},$ we have $x^H \in C_i.$ Applying Theorem \ref{GGH} to $G$, we get an injective sequence $\langle x_n \rangle_{n=1}^\infty$ in $G \backslash \{e\}$ and $i \in \{1,2,\ldots,r\}$ such that FP$(\langle x_n \rangle_{n=1}^\infty) \subset \widetilde{C_i}.$ For $n \in \mathbb{N},$ set $y_n= x_n^H.$ Because the $x_n$'s are all distinct and $H$ is finite, there exists an injective sequence $\langle y_{n_j}\rangle_{j=1}^\infty,$ a subsequence of $\langle y_{n}\rangle_{n=1}^\infty$. Set $\sigma_j = x_{n_j}$ and $\tau_j= y_{n_j}$ for $j \in \mathbb{N}.$
	
	Consider any non-empty finite subset $F$ of $\mathbb{N},$ say, $F= \{j_k: 1 \leq k \leq m\}$ with $1 \leq j_1<j_2< \cdots< j_m.$ Let $ \sigma_F= \prod_{k=1}^m \sigma_{j_k}$ and $\tau^{}_F=(\sigma_F)^H.$ Then $\sigma_F \in \widetilde{C_i}$ and, therefore, $\tau^{}_F \in C_i.$ Further, for $h= (h_{j_k})_{k=1}^m \in H^m,$ set $\sigma_h= \prod_{k=1}^m \sigma_{j_k}^{h_{j_k}}$ and $\tau_h=(\sigma_h)^H.$ Now, set $F'=\{\tau_{j_k}: 1 \leq k \leq m\}.$ Then $$\delta_{F'}= \delta_{\tau_{j_1}}*\delta_{\tau_{j_2}}* \cdots *\delta_{\tau_{j_m}}= \frac{1}{c^m} \sum_{h \in H^m} \delta_{\tau_h}.$$
	Now, for the identity $e'$ of $H^m,$ we have $\tau_{e'}= \tau^{}_F \in C_i.$ Therefore, $$\delta_{F'}(C_i) \geq \delta_{F'}(\{\tau^{}_{F}\}) \geq \frac{1}{c^m} >0.$$ Hence, $(G^H,*)$ is a $0$-Ramsey semiconvo. 
\end{proof} 

\begin{exmp} \label{ocp}
	Another way to look at CP1 is to think of it as the orbit space of the group $G=\mathbb{Z}$ with the usual addition via the action of the finite group $H=(\{1,-1\},\times )$ of automorphisms of $\mathbb{Z}$ given by multiplication $\times$ as explained in Example \ref{ocd}(iv) and then proceed as in Theorem \ref{ephr} or \ref{3.6} above. 
	For $n \in \mathbb{N}$, let $D_n= \{n, -n\}$ and let $D_0=\{0\}$. Consider any non-empty  finite subset $F=\{n_j: 1 \leq j \leq m\}$ of $\mathbb{Z}_+$. Let $D_F$ be the sum  in $\mathbb{Z}$ of orbits $D_{x_{n_j}},\, 1 \leq j \leq m $. Then $D_F$ turns out to be the union of orbits  $D_k$, $k \in \spt(\delta_F).$  Also it contains the orbit of $s_F= \sum_{j=1}^m x_{n_j}.$
\end{exmp}
\begin{exmp}
	 Let $G$ be the group $\mathbb{Z} \times \mathbb{Z}$ with the usual addition. Let $H := \{id_{\text{G}}, \alpha, \beta
	, \gamma\}$ be the group consisting of automorphisms $\text{id}_G$ and  $\alpha, \beta, \gamma : G \rightarrow G $ given by $\alpha(x,y)=(-x,y), \,\beta (x,y)=(x,-y),\, \gamma(x,y)= (-x,-y).$ By Theorem \ref{3.6}, the hypergroup $G^H$ is a recurrent hypergroup. 
	Another way to see this is to start with the subgroup $M=\mathbb{Z} \times \{0\}$ of $G.$ On $M,$ $H$ reduces to $H_1= \{id_{\text{M}}, \alpha|_{M}\}.$ So by Example \ref{ocp},  $M^{H_1}$ is a recurrent hypergroup. Next, by Remark \ref{rem4} we see that $G^H$ is a recurrent hypergroup.
\end{exmp}

We conclude the string of results by giving a semigroup version of Theorem \ref{3.6} (iii).

\begin{thm} \label{Orbit} 
	Let $(S, \cdot)$ be an infinite discrete Ramsey semigroup with identity $e$. Let $H$ be a finite group of automorphisms of $(S, \cdot)$ with $\#H =c.$ Then the space $S^H$ of orbits $s^H$ given by $s^H =\{\alpha(s): \alpha \in H\},$ equipped with the discrete topology, can be made into a recurrent semiconvo by defining ` $*$' as follows: \begin{eqnarray*}
		\delta_{s^H}* \delta_{t^H}= \frac{1}{c} \sum_{\alpha \in H} \delta_{(\alpha(s)\cdot t)^H}.  
	\end{eqnarray*}
	
\end{thm}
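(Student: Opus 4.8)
The plan is to mimic closely the proof of Theorem \ref{3.6}, replacing the group $G$ by the infinite Ramsey semigroup $(S,\cdot)$ and using the hypothesis that $S$ is itself a Ramsey semigroup in place of the appeal to the Galvin--Glazer--Hindman Theorem \ref{GGH}. First I would take an arbitrary finite partition $\{C_i\}_{i=1}^r$ of $S^H$ and pull it back to a finite partition of $S$ by setting $\widetilde{C_i}= \bigcup_{U \in C_i} U$, i.e.\ $\widetilde{C_i}$ is the union of all the orbits lying in the colour class $C_i$. Since each element $s\in S$ lies in exactly one orbit $s^H$ and that orbit lies in exactly one $C_i$, the family $\{\widetilde{C_i}\}_{i=1}^r$ is genuinely a partition of $S$, and for $s\in\widetilde{C_i}$ we have $s^H\in C_i$.

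Next, because $S$ is a Ramsey semigroup, there exist an index $i$ and an injective sequence $\langle x_n\rangle_{n=1}^\infty$ in $S$ with $\mathrm{FP}(\langle x_n\rangle_{n=1}^\infty)\subset\widetilde{C_i}$; using Remark \ref{Smee}(i) I may assume the sequence lies in $S\backslash\{e\}$. Setting $y_n = x_n^H$ and passing to an injective subsequence $\langle y_{n_j}\rangle_{j=1}^\infty$ (possible since the $x_n$ are distinct and $H$ is finite), I obtain the injective sequence $\langle \tau_j\rangle_{j=1}^\infty$ in $S^H\backslash\{e^H\}$ with $\tau_j = y_{n_j}$ and $\sigma_j = x_{n_j}$. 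For a finite subset $F=\{j_k : 1\le k\le m\}$ I would form $\sigma_F=\prod_{k=1}^m\sigma_{j_k}$ and, for $h=(h_{j_k})_{k=1}^m\in H^m$, $\sigma_h=\prod_{k=1}^m h_{j_k}(\sigma_{j_k})$, exactly as in Theorem \ref{3.6}; then the convolution expands as $\delta_{F'}=\frac{1}{c^m}\sum_{h\in H^m}\delta_{(\sigma_h)^H}$, where $F'=\{\tau_{j_k}:1\le k\le m\}$.

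The final step is the recurrence estimate: taking $h$ to be the identity of $H^m$ yields the term $(\sigma_F)^H$, and since $\sigma_F\in\mathrm{FP}(\langle x_n\rangle)\subset\widetilde{C_i}$ we get $(\sigma_F)^H\in C_i$, whence $\delta_{F'}(C_i)\ge \frac{1}{c^m}>0$, which is precisely the $0$-Ramsey (recurrent) conclusion. The one point requiring genuine care, rather than routine transcription from Theorem \ref{3.6}, is verifying that the stated $*$ actually defines a semiconvo on $S^H$ and that the convolution of $m$ point masses has the clean form $\frac{1}{c^m}\sum_{h\in H^m}\delta_{(\sigma_h)^H}$ when the underlying object is only a semigroup (so there is no inverse, and the automorphisms act on a one-sided product). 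I expect the main obstacle to be checking associativity of $*$ and the well-definedness/iteration of the convolution formula in this automorphism-orbit setting: one must confirm that $\bigl((\alpha(s)\cdot t)^H\bigr)$-type products iterate correctly using that each $\alpha\in H$ is an automorphism (so $\alpha(\sigma_h)$ redistributes over the product and merely permutes the index $h$), which is what guarantees both that $(S^H,*)$ is a legitimate semiconvo and that the expansion over $H^m$ holds; once this structural verification is in place, the Ramsey argument proceeds verbatim as above.
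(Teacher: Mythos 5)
Your proposal is correct and takes essentially the same approach as the paper's proof, which likewise verifies the semiconvo structure, pulls the partition back to $S$, invokes the Ramsey property of $S$ together with Remark \ref{Smee}(i), expands the $m$-fold convolution over $H^m$, and isolates the identity term to conclude $\delta_{F'}(C_i)>0$. The only cosmetic difference is that the paper also collapses the $H^m$-sum to an $H^{m-1}$-sum (using that a common left automorphism factor does not change an orbit), giving the slightly sharper bound $\delta_{F'}(C_i)\ge 1/c^{m-1}$ in place of your $1/c^m$; both yield recurrence.
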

\begin{proof} Simple computations give that `$*$' is associative and $e^H$ works as the identity of $S^H.$ Hence, $(S^H,*)$ is an infinite discrete semiconvo. 
	
	We prove that $(S^H,*)$ is a recurrent semiconvo. A major part of the proof is on lines of that of Theorem \ref{3.6}, but we prefer to give it in full here too. For this take any partition $\{C_i\}_{i=1}^r$ of $S^H.$ Set, for $1 \leq i \leq r,\, \widetilde{C_i}= \bigcup_{U \in C_i} U.$ Note that $S = \bigcup_{i=1}^r\widetilde{C_i}$ is a partition of $S.$ Also, for $1 \leq i \leq r$ and $s \in \widetilde{C_i},$ we have $s^H \in C_i.$ Applying Remark \ref{Smee}(i) to $(S, \cdot)$, we get an injective sequence $\langle s_n \rangle_{n=1}^\infty$ in $S \backslash \{e\}$ and $i \in \{1,2, \ldots,r\}$ such that FP$(\langle s_n \rangle_{n=1}^\infty) \subset \widetilde{C_i}.$ For $n \in \mathbb{N},$ set $t_n= s_n^H.$ Because the $s_n$'s are all distinct and $H$ is finite, there exists an injective sequence $\langle t_{n_j}\rangle_{j=1}^\infty,$ a subsequence of $\langle t_{n}\rangle_{n=1}^\infty$. Set $\sigma_j = s_{n_j}$ and $\tau_j= t_{n_j}$ for $j \in \mathbb{N}.$
	
	Consider any non-empty finite subset $F$ of $\mathbb{N},$ say, $F= \{j_k: 1 \leq k \leq m\}$ with $1 \leq j_1<j_2< \cdots< j_m.$ Let $ \sigma_F= \prod_{k=1}^m \sigma_{j_k}$ and $\tau^{}_F=(\sigma_F)^H.$ Then $\sigma_F \in \widetilde{C_i}$ and, therefore, $\tau^{}_F \in C_i.$ Further, for $\alpha= (\alpha_{j_k})_{k=1}^m \in H^m,$ set $\sigma_\alpha= \prod_{k=1}^m \alpha_{j_k}(\sigma_{j_k})= \alpha_{j_m} \prod_{i=1}^{m} \alpha_{j_m}^{-1} \alpha_{j_k} (\sigma_{j_k})$ and $\tau_\alpha=(\sigma_\alpha)^H.$ Now, set $F'=\{\tau_{j_k}: 1\leq k \leq m\}$ and $\beta'= (\beta, \text{id}_S) \in H^m $ with $\beta=(\beta_1, \beta_2, \ldots, \beta_{m-1}) \in H^{m-1}.$ Then $$\delta_{F'}= \delta_{\tau_{j_1}}*\delta_{\tau_{j_2}}* \cdots *\delta_{\tau_{j_m}}= \frac{1}{c^m} \sum_{\alpha \in H^m} \delta_{\tau_\alpha} = \frac{1}{c^{m-1}} \sum_{\beta \in H^{m-1}} \delta_{\tau_{\beta'}} .$$
	Now, for the identity $\beta_0$ of $H^{m-1},$ we have $\tau_{\beta_0'}= \tau^{}_F \in C_i.$ Therefore, $$\delta_{F'}(C_i) \geq \delta_{F'}(\{\tau_{F}\}) \geq \frac{1}{c^{m-1}} >0.$$ Hence, $(S^H,*)$ is a recurrent semiconvo.
	 \end{proof}
 \begin{exmp}
 	Let $S$ be the group $\mathbb{Z} \times \mathbb{Z}$ or the semigroup $\mathbb{N} \times \mathbb{N}$ with usual addition. Then $H=\{\text{id}_S,\alpha\}$ is a  group of automorphisms of $S$ if $\alpha(x,y)=(y,x)$ for $x,y \in S.$ Then, by Theorem \ref{Orbit}, $S^H$ is a recurrent hypergroup or semiconvo respectively. In fact, it is a Ramsey hypergroup or semiconvo as it contains a copy (coming from the orbits of points on the diagonal) of $(\mathbb{Z},+)$ or $(\mathbb{N}, +)$ respectively. 
 \end{exmp}
 
\begin{exmp} Let $S = (\mathbb{Z} \times \mathbb{N}) \cup \{(0,0)\}$ and  $H_2 =\{\text{id}_S, \alpha|_{S}\},$ where the map $\alpha:S \rightarrow S$ is  given by $\alpha(x,y)=(-x,y)$.  The orbit space $S^{H_2}$ has elements of the following two forms: 
		\begin{itemize}
			\item[(A)]$\{(x,y),(-x,y)\},\,\, 0 \neq x \in \mathbb{Z}$ and $y \in \mathbb{N},$
			\item[(B)]  $\{(0,y)\},\,\, y \in \mathbb{Z}_+.$
		\end{itemize}

		\begin{itemize}
			\item[(a)] We consider  $(S, \cdot)$ with `$\cdot$' derived from the usual addition in $\mathbb{Z} \times \mathbb{Z}.$  Then,  $(S,\cdot)$ is a Ramsey semigroup. Note that $H_2$ is a finite group of automorphisms of $(S, \cdot).$ So we can apply  Theorem \ref{Orbit} above to conclude that $(S^{H_2},*)$ is a recurrent semiconvo. It is clear from (B) that $(S^{H_2},*)$ contains a copy of the semigroup $(\mathbb{Z}_+,+).$ So the conclusion can also be derived by using Remark \ref{rem4}. In fact, we may use Remark \ref{4.3iii} and conclude that $(S^{H_2}, *)$ is Ramsey. 
			\item[(b)] We consider  $(S, \cdot)$ with `$\cdot$' defined by $(x,y) \cdot (x',y')= (x+x', \text{max}\{y,y'\}).$  Then $S$ is an infinite commutative discrete semigroup with identity $e=(0,0).$ Also each element of the form $(x,y)$ with $x \neq 0$ has infinite order. So, by Remark \ref{Remark1} and  Example \ref{copyN}(iii), $(S,\cdot)$ is a Ramsey semigroup.  Further, $H_2$ is a finite group of automorphisms of $(S, \cdot).$ By Theorem \ref{Orbit} above, $(S^{H_2},*)$ is a recurrent discrete semiconvo. In view of (B) above, it contains a copy of the semigroup $(\mathbb{Z}_+, \text{max})$ as a subhypergroup of $(S^{H_2},*).$ Hence, using Remark \ref{Smee}(i) and then Remark \ref{4.3iii} we conclude that $(S^{H_2},*)$ is a Ramsey semiconvo.
		\end{itemize}
	 \end{exmp}
	
\begin{rem} (i) Further variants of these concepts can be given on the lines of concepts in Subsection \ref{4.2} but we do not go into that.
	
	(ii) With a little extra care, the concepts above can be defined for general locally compact semiconvos or hypergroups but we do not go into that. 
\end{rem}

\section*{Acknowledgment}

Vishvesh Kumar thanks the Council of Scientific and Industrial Research, India, for its senior research fellowship. He thanks his supervisors Ritumoni Sarma and N. Shravan Kumar for their support.

A preliminary version of a part of this paper was included in the invited talk by Ajit Iqbal Singh at the conference ``The Stone-$\check{\mbox{C}}$ech compactification : Theory and Applications, at Centre for Mathematical Sciences, University of Cambridge, July 6-8 2016" in honour of Neil Hindman and Dona Strauss. She is grateful to the organizers H.G. Dales and Imre Leader for the kind invitation, hospitality and travel support. She thanks them, Dona Strauss and Neil Hindman and other participants for useful discussion. She expresses her thanks to the Indian National Science Academy for the position of INSA Emeritus Scientist and travel support. Almost half of the contributory talk given by Vishvesh Kumar at the conference ``Abstract Harmonic analysis (AHA)-2018, at National Sun Yat-sen University, Kaohsiung, Taiwan, June 25-29 2018" was based on this paper. He thanks the organizers of the conference and Indian Institute of Technology Delhi for financial support.

The authors thank George Willis for his  useful comment and suggestion.

\end{document}